\def\Escr{{\mathcal E}}
\def\Fscr{{\mathcal F}}
\def\Gscr{{\mathcal G}}
\def\Hscr{{\mathcal H}}
\def\Kscr{{\mathcal K}}
\def\Pscr{{\mathcal P}}
\def\Qscr{{\mathcal Q}}
\def\Ol{{\mathcal O}}
\def\id{\operatorname{id}}
\def\Hom{\operatorname {Hom}}
\def\RHomscr{\operatorname {\mathcal{R}\mathit{Hom}}}
\def\ker{\operatorname {Ker}}
\def\Tor{\operatorname {Tor}}
\def\coh{\operatorname {coh}}
\def\Qch{\operatorname {Qch}}
\newtheorem{lemma}{Lemma}[section]
\newtheorem{definition}[lemma]{Definition}
\newtheorem{proposition}[lemma]{Proposition}
\newtheorem{theorem}[lemma]{Theorem}
\newtheorem{corollary}[lemma]{Corollary}
\newtheorem{example}[lemma]{Example}
\newtheorem{remark}[lemma]{Remark}
\title[]{Adjoints to a Fourier-Mukai transform}
\author{\bf{Alice Rizzardo} }
\email[Alice Rizzardo]{alice.rizzardo@gmail.com}
\address{School of Mathematics\\The University of Edinburgh\\James Clerk Maxwell Building\\The King's Buildings\\Peter Guthrie Tait Road\\Edinburgh, EH9 3FD\\Scotland, UK}
\thanks{The author is a Postdoctoral Research Fellow at the University of Edinburgh.}
\keywords{Fourier-Mukai transform, adjoints}
\subjclass{13D09, 18E30, 14A22}
\begin{document}\begin{abstract}
Given a Fourier-Mukai functor $\Phi$ in the general setting of singular schemes, under various hypotheses we provide both left and a right adjoints to $\Phi$, and also give explicit formulas for them.  These formulas are simple and natural, and recover the usual formulas when the Fourier-Mukai kernel is a perfect complex. This extends previous work of \cite{tim, spaniards1, spaniards2} and has applications to the twist autoequivalences of \cite{DW}.
\end{abstract}

\maketitle

\section{Introduction}
Much of the current interest in derived categories, at least from a geometric perspective, began with the seminal result of Orlov \cite{Orlov}, who showed in 1997 that any fully faithful functor between the bounded derived categories of two smooth projective varieties $X$ and
$Y$ is isomorphic to a Fourier-Mukai transform, i.e. of the form
$$ \Phi^{X\to Y}_{\Pscr}(\cdot)= R\pi_{Y*}(L\pi_X^*(\cdot) \stackrel{L}{\otimes} \Pscr),$$
where $\pi_X$ and $\pi_Y$ are projections from $X\times Y$ to $X$ and $Y$ respectively.

More recently, there have been many attempts to improve our understanding of the Bridgeland stability manifold \cite{Bridgeland, Bayer-Bridgeland} and of the autoequivalence group of a derived category  \cite{autoequivalences, Toda}. To construct such autoequivalences, it is convenient to have explicit adjoints; however, for a Fourier-Mukai functor this is an open problem in a general setting. 

When $X$ and $Y$ are smooth over a field, and the kernel of the Fourier-Mukai transform is a perfect complex, it is well known that the left and right adjoints exist and have a prescribed form. This result has been extended by \cite{tim} to the case of separable schemes of finite type over a field. Further, \cite{spaniards1} gives a formula for adjoints for projective Gorenstein schemes over a field, under the weaker assumption that the kernel be of ``finite homological dimension'' over $X$ and $Y$, and \cite{spaniards2} gives the right adjoint in the  projective case under the same hypothesis for the kernel.

In this paper we prove the existence of both left and right adjoint to a Fourier-Mukai transform, and provide explicit formulas, under considerably weaker hypotheses. Our main result is the following:

\begin{theorem}[Theorem \ref{left}, Theorem \ref{right}]\label{thm1}
Let $X$ and $Y$ be separated, quasi-compact schemes, essentially of finite type over a Noetherian scheme $S$.
\begin{enumerate}
\item Let either $Y$ be quasi-projective over $S$, or $Y$ be such that there exists a compactification $Y\xrightarrow{j} \bar{Y}$ with $X\times_S \bar{Y}$ of finite Tor-dimension over $X$. Let $\Pscr$ be a $\pi_X$-perfect complex in $D^b_{\coh}(X\times_S Y)$. Then $\Phi^{X\to Y}_{\Pscr}:D_{\Qch}(X)\to D_{\Qch}(Y)$ has a left adjoint $\Phi^{Y\to X}_{\Pscr_L}$, where 
$$\Pscr_L=\RHomscr_{X\times_S Y}(\Pscr,\pi_X^!\Ol_X).$$

\item Let either $X$ be quasi-projective over $S$, or $X$ be such that there exists a compactification $X\xrightarrow{j} \bar{X}$ with $\bar{X}\times_S Y$ of finite Tor-dimension over $Y$. Let $\Pscr$ be a $\pi_Y$-perfect complex in $D^b_{\coh}(X\times_S Y)$. Then $\Phi^{X\to Y}_{\Pscr}:D_{\Qch}(X)\to D_{\Qch}(Y) $ has a right adjoint $\Phi^{Y\to X}_{\Pscr_R}$, where 
$$\Pscr_R=\RHomscr_{X\times_S Y}(\Pscr,\pi_Y^!\Ol_Y).$$
\end{enumerate}
\end{theorem}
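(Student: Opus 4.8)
The plan is to prove part~(2) directly, by a chain of adjunction isomorphisms, and to deduce part~(1) from it by Grothendieck--Verdier biduality. Write $Z:=X\times_S Y$, with projections $\pi_X,\pi_Y$.

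\emph{Part (2).} Because $\pi_Y$ is quasi-compact and quasi-separated, $R\pi_{Y*}$ preserves coproducts and so has a right adjoint $\pi_Y^{\times}$; together with $-\otimes^{L}\Pscr\dashv\RHomscr_{Z}(\Pscr,-)$ and $L\pi_X^{*}\dashv R\pi_{X*}$ this already shows $\Phi^{X\to Y}_{\Pscr}=R\pi_{Y*}\circ(-\otimes^{L}\Pscr)\circ L\pi_X^{*}$ has a right adjoint; the task is to identify it. Fix the compactification $j\colon X\hookrightarrow\bar X$ with $\bar X$ proper over $S$ (in the quasi-projective case, the closure in a projective space over $S$), let $\bar j\colon Z\hookrightarrow\bar Z:=\bar X\times_S Y$ be the base change of $j$, an open immersion, and $p\colon\bar Z\to Y$ the projection. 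Then $p$ is proper, $\pi_Y=p\circ\bar j$, $\pi_Y^{!}=\bar j^{*}p^{!}$ and $\pi_Y^{\times}=\bar j^{\times}p^{!}$. The hypothesis that $\bar Z$ have finite Tor-dimension over $Y$ is exactly that the (finite-type, hence pseudo-coherent) morphism $p$ is perfect, so $p^{!}$ twists $Lp^{*}$ by the relative dualizing complex $p^{!}\Ol_Y$; restricting along the flat map $\bar j$ this gives $\pi_Y^{!}(B)\cong L\pi_Y^{*}(B)\otimes^{L}\pi_Y^{!}\Ol_Y$ for all $B\in D_{\Qch}(Y)$. Now, for $A\in D_{\Qch}(X)$ and $B\in D_{\Qch}(Y)$:
\begin{align*}
\Hom_{D_{\Qch}(Y)}\!\big(\Phi^{X\to Y}_{\Pscr}(A),\,B\big)
&\cong\Hom_{D_{\Qch}(Z)}\!\big(L\pi_X^{*}A\otimes^{L}\Pscr,\;\pi_Y^{\times}B\big)\\
&\cong\Hom_{D_{\Qch}(Z)}\!\big(L\pi_X^{*}A,\;\RHomscr_{Z}(\Pscr,\,\pi_Y^{\times}B)\big)\\
&\cong\Hom_{D_{\Qch}(Z)}\!\big(L\pi_X^{*}A,\;\RHomscr_{Z}(\Pscr,\,\pi_Y^{!}B)\big)\\
&\cong\Hom_{D_{\Qch}(Z)}\!\big(L\pi_X^{*}A,\;\RHomscr_{Z}(\Pscr,\,\pi_Y^{!}\Ol_Y)\otimes^{L}L\pi_Y^{*}B\big)\\
&\cong\Hom_{D_{\Qch}(X)}\!\big(A,\;R\pi_{X*}\!\big(\Pscr_R\otimes^{L}L\pi_Y^{*}B\big)\big)=\Hom_{D_{\Qch}(X)}\!\big(A,\;\Phi^{Y\to X}_{\Pscr_R}(B)\big).
\end{align*}
Lines~1--2 are $R\pi_{Y*}\dashv\pi_Y^{\times}$ and $-\otimes^{L}\Pscr\dashv\RHomscr_{Z}(\Pscr,-)$. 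Line~3 is the identity $\RHomscr_{Z}(\Pscr,\pi_Y^{\times}B)\cong\RHomscr_{Z}(\Pscr,\pi_Y^{!}B)$, obtained by applying $\bar j^{*}R\bar j_*\cong\id$, then the sheafy form $R\bar j_*\RHomscr_{Z}(M,\bar j^{\times}N)\cong\RHomscr_{\bar Z}(R\bar j_*M,N)$ of $R\bar j_*\dashv\bar j^{\times}$ (with $M=\Pscr$, $N=p^{!}B$), then the compatibility of $\RHomscr$ with the flat restriction $\bar j^{*}$, then $\bar j^{*}R\bar j_*\Pscr\cong\Pscr$ again. Line~4 combines $\pi_Y^{!}B\cong L\pi_Y^{*}B\otimes^{L}\pi_Y^{!}\Ol_Y$ with the fact that $\Pscr$ being $\pi_Y$-perfect makes $\RHomscr_{Z}(\Pscr,-)$ commute with $-\otimes^{L}L\pi_Y^{*}B$. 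Line~5 is the definition of $\Pscr_R$, the adjunction $L\pi_X^{*}\dashv R\pi_{X*}$, and the definition of $\Phi^{Y\to X}_{\Pscr_R}$. All isomorphisms are natural in $A$ and $B$, so by Yoneda $\Phi^{Y\to X}_{\Pscr_R}$ is the right adjoint of $\Phi^{X\to Y}_{\Pscr}$.

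\emph{Part (1).} Apply part~(2) to $(Y,X,\Pscr_L)$ with $\Pscr_L=\RHomscr_{Z}(\Pscr,\pi_X^{!}\Ol_X)$: its hypotheses hold, because $\Pscr_L$ is again $\pi_X$-perfect (the relative dual of a relatively perfect complex is relatively perfect) and the condition on $Y$ in part~(1) is exactly the one demanded of the first factor by part~(2) after the swap (via $\bar Y\times_S X\cong X\times_S\bar Y$). Hence $\Phi^{Y\to X}_{\Pscr_L}$ has a right adjoint $\Phi^{X\to Y}_{(\Pscr_L)_R}$ with $(\Pscr_L)_R=\RHomscr_{Z}\!\big(\RHomscr_{Z}(\Pscr,\pi_X^{!}\Ol_X),\,\pi_X^{!}\Ol_X\big)\cong\Pscr$ by biduality for $\pi_X$-perfect complexes; equivalently $\Phi^{X\to Y}_{\Pscr}$ has $\Phi^{Y\to X}_{\Pscr_L}$ as a left adjoint, which is part~(1).

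\emph{Where the difficulty lies.} The substantive obstacle is the non-properness of $\pi_X,\pi_Y$, for which neither Grothendieck--Verdier duality nor the relative-dualizing-complex formula is directly available, and for which a general Nagata compactification is too crude: the relative dualizing complex of $\bar X\times_S Y$ over $Y$ may have infinite Tor-amplitude, breaking $\pi_Y^{!}B\cong L\pi_Y^{*}B\otimes^{L}\pi_Y^{!}\Ol_Y$. The two alternative hypotheses are precisely what repairs this --- either a compactification whose product with the relevant factor has finite Tor-dimension over it (making $p$ perfect), or, via quasi-projectivity, enough relative ampleness to replace $\Pscr$ by and control the chain through resolutions by twists of a relatively ample sheaf. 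The other point requiring care --- what makes the chain valid for the open immersion $\bar j$ --- is the comparison $\RHomscr_{Z}(\Pscr,\pi_Y^{\times}B)\cong\RHomscr_{Z}(\Pscr,\pi_Y^{!}B)$, i.e.\ that inside $\RHomscr_{Z}(\Pscr,-)$ the abstract right adjoint $\bar j^{\times}$ of $R\bar j_*$ may be traded for $\bar j^{*}$; this uses full faithfulness of $R\bar j_*$ and compatibility of $\RHomscr$ with flat restriction. The remaining checks (all complexes stay in $D_{\Qch}$, with $R\bar j_*$ preserving quasi-coherence but not coherence; $\pi_Y^{!}\Ol_Y\in D^b_{\coh}$; naturality) are routine.
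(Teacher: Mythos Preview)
Your overall architecture is close to the paper's---a chain of functorial isomorphisms routed through a compactification---but the organization is genuinely different in two respects. First, you keep the chain on $Z=X\times_S Y$ and negotiate the non-proper $\pi_Y$ by comparing $\pi_Y^{\times}$ with $\pi_Y^{!}$ inside $\RHomscr_Z(\Pscr,-)$ (your line~3); the paper instead pushes everything forward along $(j\times\id)_*$ to $\bar X\times_S Y$ and works there with the proper map $\bar\pi_Y$, invoking reflexivity of $(j\times\id)_*\Pscr_R$ with respect to $\bar\pi_Y^{!}\Ol_Y$ rather than your $\pi_Y^{\times}/\pi_Y^{!}$ trick. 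Second, and this is a real economy, you deduce part~(1) from part~(2) by biduality: applying~(2) to $(Y,X,\Pscr_L)$ and using that $\Pscr_L$ is again $\pi_X$-perfect (the paper's Proposition~2.8) and that $\RHomscr_Z(\RHomscr_Z(\Pscr,\pi_X^{!}\Ol_X),\pi_X^{!}\Ol_X)\cong\Pscr$ (the paper's Lemma~2.10). The paper proves the two parts by parallel but independent chains; your reduction halves the work.

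There is, however, a gap in your line~4 in the quasi-projective case. You justify $\pi_Y^{!}B\cong L\pi_Y^{*}B\otimes\pi_Y^{!}\Ol_Y$ and the commutation $\RHomscr_Z(\Pscr,\pi_Y^{!}\Ol_Y)\otimes L\pi_Y^{*}B\cong\RHomscr_Z(\Pscr,\pi_Y^{!}B)$ only under the finite Tor-dimension hypothesis on $p:\bar X\times_S Y\to Y$; when $X$ is merely quasi-projective and $\bar X$ is its projective closure, $p$ need not be of finite Tor-dimension, and neither isomorphism is available as stated. Your remark that ampleness lets one ``control the chain through resolutions by twists'' is the right intuition but not a proof. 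This is exactly the issue the paper isolates as Lemma~2.12 (its ``pull out the tensor''): for $g$ \emph{projective} and $\Qscr$ $g$-perfect one has $\RHomscr(\Qscr,g^{!}\Ol_Y)\otimes g^{*}\Fscr\cong\RHomscr(\Qscr,g^{!}\Fscr)$, proved by twisting by $\Ol(n)$ and pushing down via Grothendieck duality. The paper applies it on $\bar X\times_S Y$ to $\Qscr=(j\times\id)_*\Pscr_R$; if you want to stay on $Z$, you can pull the same lemma back along $\bar j^{*}$ (using $\bar j^{*}\RHomscr_{\bar Z}(-,-)\cong\RHomscr_Z(\bar j^{*}-,\bar j^{*}-)$ and $\bar j^{*}\bar j_{*}\cong\id$) to get the quasi-projective version of your line~4. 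Either way, the projective case requires this extra ingredient and your write-up should supply it.
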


The formulas for the adjoints in the theorem above coincide with the usual formulas when the kernel $\Pscr\in D^b_{\coh}(X\times_S Y)$ is a perfect complex. In general, however, dualizing is not well-behaved and therefore the formulas stated above are more natural and can be applied in this more general context.

When the given Fourier-Mukai transform is an equivalence of categories, we show that $\Pscr$ is both $\pi_X$- and $\pi_Y$-perfect, so that the explicit formulas above do realize the adjoints: 

\begin{corollary}
Assume that $X\to S$ and $Y\to S$ are projective. Let $\Pscr \in D^b_{\Qch}(X\times_S Y)$. If the functor $\Phi^{X\to Y}_{\Pscr}$ gives an equivalence $D^b_{\coh}(X)$ to $D^b_{\coh}(Y)$, then
\begin{enumerate}
\item $\Pscr$ is $\pi_X$- and $\pi_Y$-perfect
\item $\Phi^{X\to Y}_{\Pscr}$ has left and right adjoints given by the formulas of Theorem \ref{thm1}.
\end{enumerate}
\end{corollary}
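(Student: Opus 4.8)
The plan is to deduce part (1) from general properties of equivalences together with a local criterion for $\pi_X$- and $\pi_Y$-perfectness, and then to get part (2) as an immediate consequence of Theorem~\ref{thm1}.

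\medskip

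For part (1), I would argue that being $\pi_X$-perfect is a condition that can be checked fibrewise: a complex $\Pscr\in D^b_{\coh}(X\times_S Y)$ is $\pi_X$-perfect precisely when, for every point $x\in X$, the derived restriction $\Pscr|_{\pi_X^{-1}(x)}$ (a complex on the fibre, a scheme over the residue field $k(x)$) is a perfect complex there; equivalently, $L i_x^*\Pscr$ has bounded-above and bounded-below cohomology in a way that is uniform over a neighbourhood. The first step is therefore to reduce to a statement about the derived fibres of $\Phi^{X\to Y}_\Pscr$. Since $\Phi^{X\to Y}_\Pscr$ is an equivalence $D^b_{\coh}(X)\xrightarrow{\sim} D^b_{\coh}(Y)$, it sends the skyscraper-type objects — or rather the structure sheaves $\Ol_x$ of points, viewed in $D^b_{\coh}(X)$ — to objects of $D^b_{\coh}(Y)$, and more to the point it is compatible with the monoidal/base-change structure enough that the "fibre kernels" $\Pscr|_{\pi_X^{-1}(x)}$ are themselves Fourier-Mukai kernels of fully faithful functors out of $D^b_{\coh}$ of the (zero-dimensional, hence regular) fibre. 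Over a field a Fourier-Mukai kernel inducing a fully faithful functor between bounded derived categories of Noetherian schemes is automatically perfect (this is the classical fact underlying Orlov-type statements, and it is exactly the input provided by the hypotheses of \cite{tim}); applying it fibrewise gives that $\Pscr$ is $\pi_X$-perfect. The symmetric argument, using that the quasi-inverse $\Phi^{Y\to X}$ is also an equivalence with kernel obtained from $\Pscr$, gives $\pi_Y$-perfectness. The main obstacle here — and the step I would spend the most care on — is making the fibrewise reduction rigorous: one must check that derived restriction of the kernel to a fibre $\pi_X^{-1}(x)\cong Y_{k(x)}$ really does compute a Fourier-Mukai functor on the fibre that remains fully faithful, which requires a base-change argument and the observation that $\Phi$ being an equivalence forces enough vanishing of the relevant $\Ext$-groups after restriction. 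Projectivity of $X\to S$ and $Y\to S$ is what lets us invoke the representability and base-change machinery cleanly here.

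\medskip

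Once (1) is established, the hypotheses of Theorem~\ref{thm1} are met: $\Pscr\in D^b_{\coh}(X\times_S Y)$ is $\pi_X$-perfect and $\pi_Y$-perfect, and projectivity over $S$ gives in particular quasi-projectivity, so both the left-adjoint branch (1) and the right-adjoint branch (2) of Theorem~\ref{thm1} apply verbatim. This yields that $\Phi^{X\to Y}_\Pscr\colon D_{\Qch}(X)\to D_{\Qch}(Y)$ has a left adjoint $\Phi^{Y\to X}_{\Pscr_L}$ with $\Pscr_L=\RHomscr_{X\times_S Y}(\Pscr,\pi_X^!\Ol_X)$ and a right adjoint $\Phi^{Y\to X}_{\Pscr_R}$ with $\Pscr_R=\RHomscr_{X\times_S Y}(\Pscr,\pi_Y^!\Ol_Y)$, which is exactly the content of part (2). (As a sanity check, since $\Phi$ is an equivalence these two adjoints must agree up to natural isomorphism with the quasi-inverse, so $\Pscr_L\cong\Pscr_R$; this is consistent because for a $\pi_X$- and $\pi_Y$-perfect kernel the two dualizing formulas differ by the relative dualizing complex of $\pi_X$ versus $\pi_Y$, which the equivalence forces to match up, but we do not need this observation for the proof.)
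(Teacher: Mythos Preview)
Your handling of part~(2) is fine and matches the paper: once (1) is known, projectivity gives quasi-projectivity and Theorem~\ref{thm1} applies directly. The problem is entirely in your argument for part~(1), which differs substantially from the paper's route and, as written, does not go through.

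First, your fibrewise criterion is misstated. For $\Pscr\in D^b_{\coh}(X\times_S Y)$, being $\pi_X$-perfect is a statement about finite Tor-amplitude over $\pi_X^{-1}\Ol_X$; restricted to a fibre $\pi_X^{-1}(x)$ this becomes finite Tor-amplitude over the field $k(x)$, which is automatic. So the relevant fibrewise condition is simply that $L i_x^*\Pscr$ be \emph{bounded}, not that it be a perfect complex on the (possibly very singular) scheme $\pi_X^{-1}(x)=\spec k(x)\times_S Y$. Your parenthetical ``equivalently'' hints at this, but the argument you then run is aimed at proving perfectness on the fibre, which is both stronger than needed and not what you would get from a fully faithful fibrewise functor anyway.

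Second, and more seriously, the claim that restricting an equivalence to fibres yields fully faithful Fourier--Mukai functors is unproven and not obviously true; you flag this yourself as the delicate step, but no mechanism is offered. Likewise, the assertion that a fully faithful Fourier--Mukai kernel over a field is automatically perfect is an Orlov-type statement that is known in the smooth case but is not available in the singular generality needed here; \cite{tim} does not supply it.

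The paper bypasses all of this. It proves (Lemma~\ref{finite homological dimension}) that $\pi_X$-perfectness is equivalent to the concrete boundedness condition ``$\Pscr\otimes\pi_X^*\Fscr\in D^b_{\coh}(X\times_S Y)$ for all $\Fscr\in D^b_{\coh}(X)$'', and then (Corollary~\ref{X-perfect}) uses an ampleness argument (Lemma~\ref{beilinson}) to read this off directly from the fact that $\Phi^{X\to Y}_\Pscr$ lands in $D^b_{\coh}(Y)$. For $\pi_Y$-perfectness the paper simply invokes \cite[Proposition~2.10]{spaniards2}, which in the projective setting shows that an equivalence forces the analogous finite-homological-dimension condition over $Y$ as well; combined with Lemma~\ref{finite homological dimension} this gives $\pi_Y$-perfectness. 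This is both shorter and avoids the unjustified base-change-to-fibres step entirely.
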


One of the ingredients in this paper consists of recent results on the twisted inverse image pseudofunctor developed by Lipman \cite{Lipman}, Neeman \cite{Neeman} and their co-authors. These results hold on the level of the derived category of quasi-coherent sheaves, so our statements on the adjoints will hold in $D_{\Qch}(X)$ as opposed to the whole derived category of sheaves $D(X)$.

The results in this paper have applications to the study of flopping contractions and noncommutative deformations of Donovan-Wemyss, see \cite[Remark 7.7]{DW}.

\subsection{Acknowledgements}
This research was made possible in part by EPSRC grant number EP/K021400/1. The author would like to thank Michael Wemyss for many useful discussions. She would also like to thank Johan de Jong, Michel Van den Bergh, Alexander Kuznetsov, Julian Holstein, Greg Stevenson, and Long Dao for helpful conversations.

\subsection{Notation and conventions}
Throughout this paper, $X$ and $Y$ will be separated, quasi-compact schemes, essentially of finite type over a Noetherian scheme $S$. This implies that $X$ and $Y$ are Noetherian and so is $X\times_S Y$.

We denote by $D(X)$ the derived category of $\Ol_X$-modules. The subcategories $D^+(X)$, resp. $D^-(X)$ are the full subcategories whose objects are complexes whose cohomology vanishes in all but finitely many negative, resp. positive degrees; we set $D^b(X)=D^+(X)\cap D^-(X)$. For $\bullet= \emptyset, +,-,b$ we denote by $D^{\bullet}_{\Qch}(X)$, resp. $D^{\bullet}_{\coh}(X)$ the full subcategories whose objects are complexes with quasi-coherent, resp. coherent cohomology. 

We will omit the R's and L's in front of derived functors for simplicity. All functors are derived. We denote by $\pi_X$ and $\pi_Y$ the projections of the fiber product $X\times_S Y$ onto its factors. For $\Pscr\in D^b_{\coh}(X\times_S Y)$, the Fourier-Mukai transform with kernel $\Pscr$ is defined as the functor
$$\Phi^{X\to Y}_{\Pscr} (\cdot)=\pi_{Y*}(\pi_X^*(\cdot)\otimes \Pscr)$$
mapping $D_{\Qch}(X)$ to $D_{\Qch}(Y)$. 

%%%%%%%%%%%%%%%%%%%%%%%%%
\section{Preliminaries}%%%%%%%%%%%%
%%%%%%%%%%%%%%%%%%%%%%%%%

For the reader's convenience, we will state here some well-known fact about functors in derived categories.

\begin{lemma} \cite[Theorem A]{Spaltenstein}\label{Spaltenstein}
If $X$ is a scheme, and $\Fscr,\Gscr,\Hscr\in D(X)$, then there is a functorial isomorphism
$$\RHomscr_X(\Fscr,\RHomscr_X(\Gscr,\Hscr))\cong \RHomscr_X(\Fscr\otimes \Gscr,\Hscr).$$
For a morphism of schemes $f:X\to Y$, $\Fscr\in D(Y)$, $\Gscr\in D(X)$, we have
$$\RHomscr_Y(\Fscr, f_* \Gscr)\cong f_* \RHomscr_X(f^* \Fscr, \Gscr)$$
\end{lemma}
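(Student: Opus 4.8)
The statement is exactly \cite[Theorem A]{Spaltenstein}, so in the paper itself I would simply cite it; here is the shape of the argument I would give if asked to prove it. The plan is to establish both isomorphisms by writing each side as an honest total-Hom / total-tensor complex built from suitably chosen resolutions, using Spaltenstein's theorem that every object of $D(X)$ admits both a K-flat and a K-injective resolution, and then invoking the ordinary (underived) sheaf adjunctions term by term. I would use the following standard facts without further comment: for a morphism $f\colon X\to Y$, the ordinary inverse image sends K-flat complexes to K-flat complexes and the ordinary direct image sends K-injective complexes to K-injective complexes; a tensor product of two K-flat complexes is K-flat; and if $\Pscr$ is K-flat and $\Iscr$ is K-injective then $\Homscr_X(\Pscr,\Iscr)$ is K-injective. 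This last fact is the one genuinely non-formal input, and it is what makes the chain of identifications below legitimate.

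For the first isomorphism, I would fix K-flat resolutions $\Qscr\xrightarrow{\sim}\Fscr$ and $\Pscr\xrightarrow{\sim}\Gscr$ and a K-injective resolution $\Hscr\xrightarrow{\sim}\Iscr$. Then $\Homscr_X(\Pscr,\Iscr)$ represents $\RHomscr_X(\Gscr,\Hscr)$ and is itself K-injective, so $\Homscr_X(\Qscr,\Homscr_X(\Pscr,\Iscr))$ represents $\RHomscr_X(\Fscr,\RHomscr_X(\Gscr,\Hscr))$; meanwhile $\Qscr\otimes\Pscr$ is a K-flat resolution of $\Fscr\otimes\Gscr$, so $\Homscr_X(\Qscr\otimes\Pscr,\Iscr)$ represents $\RHomscr_X(\Fscr\otimes\Gscr,\Hscr)$. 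It then remains to identify these two complexes, which is the classical tensor--hom adjunction $\Homscr_X(A,\Homscr_X(B,C))\cong\Homscr_X(A\otimes B,C)$ for $\Ol_X$-modules, applied in each bidegree and reassembled into total complexes — the only thing to check is compatibility with the three differentials, which is the usual sign verification. Functoriality follows from functoriality of the resolutions, or equivalently from independence of the chosen resolution.

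For the second isomorphism, I would fix a K-injective resolution $\Gscr\xrightarrow{\sim}\Iscr$ on $X$ and a K-flat resolution $\Qscr\xrightarrow{\sim}\Fscr$ on $Y$; recall that in this paper $f^*$ denotes the derived functor, so $f^*\Fscr$ is represented by the ordinary inverse image of $\Qscr$, which is K-flat on $X$. Then $\Homscr_X(f^*\Qscr,\Iscr)$ represents $\RHomscr_X(f^*\Fscr,\Gscr)$ and is K-injective, so applying the ordinary direct image computes $f_*\RHomscr_X(f^*\Fscr,\Gscr)$; on the other side, $f_*\Iscr$ is K-injective on $Y$, so $\Homscr_Y(\Qscr,f_*\Iscr)$ represents $\RHomscr_Y(\Fscr,f_*\Gscr)$. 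The identification $f_*\Homscr_X(f^*\Qscr,\Iscr)\cong\Homscr_Y(\Qscr,f_*\Iscr)$ is then the ordinary $(f^*,f_*)$-adjunction in internal-Hom form, again applied termwise and assembled into total complexes.

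\textbf{Main obstacle.} Beyond the routine sign bookkeeping in the termwise arguments, the subtle point is the bookkeeping of \emph{which} complex is K-flat and which is K-injective at each stage, since that is precisely what lets us evaluate each outer derived functor directly on the complex at hand without resolving again: one uses that $\RHomscr_X(-,\Iscr)=\Homscr_X(-,\Iscr)$ and that $f_*$ agrees with its derived functor on K-injective complexes, together with the stability statements listed above, the crucial one being that $\Homscr_X(\text{K-flat},\text{K-injective})$ is again K-injective. In the generality here — arbitrary $\Ol_X$-modules, unbounded complexes, no Noetherian hypothesis on the modules — one cannot fall back on bounded-below injective resolutions, so Spaltenstein's existence theorems are genuinely needed; this is the reason the result is quoted from \cite{Spaltenstein} rather than proved by elementary means.
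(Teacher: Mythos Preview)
The paper itself gives no proof --- the lemma is stated with a bare citation to \cite[Theorem A]{Spaltenstein} --- so there is nothing to compare against beyond Spaltenstein's original argument, and you correctly note this. Your sketch of the first isomorphism is correct and is the standard one.

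For the second isomorphism, however, one of the ``standard facts'' you list is false in the stated generality: $f_*$ does \emph{not} send K-injective complexes of $\Ol_X$-modules to K-injective complexes of $\Ol_Y$-modules unless $f$ is flat. Indeed, by the homotopy-category adjunction $\Hom_{K(\Ol_Y)}(A,f_*\Iscr)\cong\Hom_{K(\Ol_X)}(f^*A,\Iscr)$, preservation of K-injectivity by $f_*$ is equivalent to the underived $f^*$ preserving acyclic complexes, i.e.\ to $f^*$ being exact, which fails whenever $\Ol_X$ is not flat over $f^{-1}\Ol_Y$. Consequently your claim that $\Homscr_Y(\Qscr,f_*\Iscr)$ computes $\RHomscr_Y(\Fscr,f_*\Gscr)$ is not justified as written. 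The rest of the argument survives: $\Homscr_X(f^*\Qscr,\Iscr)$ is genuinely K-injective, so applying $f_*$ to it does compute $f_*\RHomscr_X(f^*\Fscr,\Gscr)$, and the underived sheaf adjunction identifies that with $\Homscr_Y(\Qscr,f_*\Iscr)$. What is missing is a separate argument linking $\Homscr_Y(\Qscr,f_*\Iscr)$ to $\RHomscr_Y(\Fscr,f_*\Iscr)$ --- for instance by constructing the comparison map abstractly from the counit $f^*f_*\to\id$ and checking it is an isomorphism, or by replacing the K-flat resolution $\Qscr$ with a K-projective one when the scheme permits.
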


We collect in the following lemma some basic facts about open immersions:

\begin{lemma}\label{tim} 
Let $X$ be quasi-compact, quasi-separated and let $U\xrightarrow{j} X$ an open immersion. Then
\begin{enumerate}
\item $j_* \RHomscr_{U}(\Fscr,\Gscr) = \RHomscr_{X} (j_* \Fscr,j_*\Gscr)$ for all $\Fscr$, $\Gscr \in D(U)$
\item $j_*(\Fscr \otimes \Gscr)=j_*\Fscr\otimes j_*\Gscr$ for all $\Fscr$, $\Gscr \in D(U)$
\item $j^* \RHomscr_{X}(\Fscr,\Gscr) = \RHomscr_{U} (j^* \Fscr,j^*\Gscr)$ for all $\Fscr$, $\Gscr \in D(X)$
\end{enumerate}
\end{lemma}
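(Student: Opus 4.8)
The plan is to prove Lemma~\ref{tim} by reducing each statement to the flatness of the open immersion $j$ and to the projection/base-change properties it enjoys. The key observation underlying all three parts is that for an open immersion $j\colon U\to X$ one has $j^*j_* \cong \id$ (indeed $j$ is flat and a monomorphism), that $j_*$ is fully faithful, and that $j^*$ is simultaneously a left and (since $j$ is flat) an exact functor that also commutes with $\RHomscr$ of bounded-below complexes against bounded-above ones. I would also want to record that $j_*$ has a left adjoint $j^*=Lj^*$ and, by Neeman--Lipman, that for such a flat quasi-compact quasi-separated map $j$ the functor $j^*$ preserves perfection and $j_*$ commutes with $\otimes$ via the projection formula $j_*(\Fscr)\otimes\Gscr\cong j_*(\Fscr\otimes j^*\Gscr)$.

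For part (3), the statement $j^*\RHomscr_X(\Fscr,\Gscr)\cong\RHomscr_U(j^*\Fscr,j^*\Gscr)$ is the classical fact that internal Hom commutes with flat (here, open-immersion) base change; I would deduce it from the second displayed isomorphism in Lemma~\ref{Spaltenstein} (the sheafy adjunction $\RHomscr_Y(\Fscr,f_*\Gscr)\cong f_*\RHomscr_X(f^*\Fscr,\Gscr)$) together with $j^*j_*\cong\id$: apply that isomorphism with $f=j$ and $\Gscr$ replaced by $j^*\Gscr$, and then use $j_*j^*\Gscr$ versus $\Gscr$ on the open set $U$, or more directly note that both sides of (3) represent the same functor $\Escr\mapsto \Hom_{D(U)}(\Escr\otimes j^*\Fscr, j^*\Gscr)$ after applying $j_*$ and using full faithfulness of $j_*$. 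For part (1), $j_*\RHomscr_U(\Fscr,\Gscr)\cong\RHomscr_X(j_*\Fscr,j_*\Gscr)$, I would again invoke the sheafy adjunction of Lemma~\ref{Spaltenstein}: $\RHomscr_X(j_*\Fscr,j_*\Gscr)\cong j_*\RHomscr_U(j^*j_*\Fscr,\Gscr)\cong j_*\RHomscr_U(\Fscr,\Gscr)$, where the last step uses $j^*j_*\cong\id$. For part (2), the projection formula gives $j_*\Fscr\otimes j_*\Gscr\cong j_*(\Fscr\otimes j^*j_*\Gscr)\cong j_*(\Fscr\otimes\Gscr)$, again by $j^*j_*\cong\id$; one should check the projection formula applies here, which it does since $j$ is quasi-compact and quasi-separated (this is where the hypotheses on $X$ enter).

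Each of the three parts therefore follows the same two-line pattern: feed the relevant adjunction or projection formula from the Preliminaries into the identity $j^*j_*\cong\id$. The only genuinely non-formal input is the validity of the projection formula and of the sheafy $\RHomscr$-adjunction at the level of unbounded derived categories of quasi-coherent (or arbitrary $\Ol_X$-module) complexes; I expect the main obstacle, such as it is, to be bookkeeping about boundedness hypotheses and about whether these isomorphisms are being asserted in $D(X)$ or $D_{\Qch}(X)$. Since $j$ is an open immersion the projection formula and base change hold without restriction (open immersions are flat, and $j_*$ commutes with arbitrary direct sums on a quasi-compact quasi-separated scheme), so no boundedness is actually needed and all three isomorphisms are functorial and hold in full generality; I would simply cite the relevant statements in \cite{Lipman, Neeman} for the unbounded versions and keep the verifications to the short chains of natural isomorphisms indicated above.
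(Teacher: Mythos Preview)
Your proposal is correct and follows essentially the same approach as the paper: parts (1) and (2) are derived from $j^*j_*\cong\id$ together with the sheafy adjunction of Lemma~\ref{Spaltenstein} (resp.\ the projection formula), exactly as the paper does. For part (3) the paper is even terser and simply cites \cite[Proposition~4.6.7]{Lipman} rather than sketching an argument.
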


\begin{proof}
The first two identities are well known and follow from the fact that for an open immersion we have an isomorphism $j^* j_*\cong \id$. For example, to see (1) observe that
$$j_*\RHomscr_{U}(\Fscr,\Gscr)=j_* \RHomscr_{U}(j^*j_* \Fscr,\Gscr)=\RHomscr_{X}(j_*\Fscr, j_*\Gscr)$$
by Lemma \ref{Spaltenstein}. The second assertion can be proved in a similar way.

The third identity is \cite[Proposition 4.6.7]{Lipman}.
\end{proof}

%%%%%%%%%%%%%%%%%%%%%%%%
\subsection{The twisted inverse image pseudofunctor}
We use the notaton from \cite{Lipman}. By \cite[Theorem 1.2]{Neeman}, for a separated morphism $f:X\to Y$  essentially of finite type, there exists a well-defined functor $f^!: D_{\Qch}(Y) \to D_{\Qch}(X)$. If $f:X\to Y$ is proper then $f^!$ coincides with $f^{\times}$, the right adjoint to the derived direct image functor $Rf_*$. If $j$ is an open immersion then $j^!=j^*$. For the composition of two separated morphisms $X\xrightarrow{f} Y\xrightarrow{g} Z$  essentially of finite type, we have that $(gf)^!=f^!g^!$ if one of the conditions below holds:
\begin{itemize}
\item $f$ is of finite Tor-dimension.
\item The composite $gf$ is proper.
\item We restrict to the subcategory $D^+_{\Qch}(Z)\subset D_{\Qch}(Z)$.
\end{itemize}

\begin{lemma}\label{ref-4.4-2}\cite[Proposition 7.11]{Neeman}
If $f : X\to Y$ is a separated morphism essentially of finite type, $E$ is a perfect complex in $D_{\Qch}(Y )$ and $F\in D_{\Qch}(Y )$ is arbitrary, then the map
$$\sigma(f, E, F) : f^*E\otimes f^!F\to f^!(E\otimes F)$$
is an isomorphism. If f is of finite Tor-dimension then $\sigma(f, E, F)$ is an isomorphism for all $E,F\in D_{\Qch}(Y)$.
\end{lemma}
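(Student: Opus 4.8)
The plan is to prove each assertion by a d\'evissage in the variable $E$, in both cases reducing to a situation where $\sigma(f,E,F)$ is visibly an isomorphism. The common mechanism is that, for fixed $F$, both $E\mapsto f^{*}E\otimes f^{!}F$ and $E\mapsto f^{!}(E\otimes F)$ are triangulated functors $D_{\Qch}(Y)\to D_{\Qch}(X)$ — the first because $f^{*}$ and $-\otimes f^{!}F$ are, the second because $-\otimes F$ and $f^{!}$ are — and $\sigma(f,-,F)$ is a natural transformation between them. Hence the full subcategory $\Escr_{F}=\{E:\sigma(f,E,F)\text{ is an isomorphism}\}$ is automatically thick, and is closed under whatever coproducts the two functors happen to preserve.

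For the first assertion I would use that the statement is local on $Y$. If $u:U\hookrightarrow Y$ is an open immersion with preimage $v:f^{-1}(U)\hookrightarrow X$ and $f_{U}:f^{-1}(U)\to U$, then the flat base-change compatibilities of the twisted inverse image (see \cite{Lipman,Neeman}) give $v^{*}f^{!}\cong f_{U}^{!}u^{*}$ and $v^{*}\sigma(f,E,F)\cong\sigma(f_{U},u^{*}E,u^{*}F)$, and as $U$ runs over an affine cover the $v^{*}$ are jointly conservative. For $E=\Ol_{Y}$ the map $\sigma(f,\Ol_{Y},F)$ is, modulo the canonical isomorphisms $f^{*}\Ol_{Y}\cong\Ol_{X}$, $\Ol_{Y}\otimes F\cong F$ and $\Ol_{X}\otimes f^{!}F\cong f^{!}F$, the identity of $f^{!}F$; hence $\Ol_{U}$ lies in the corresponding subcategory for $f_{U}$. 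On an affine $U$ every perfect complex lies in the thick subcategory generated by $\Ol_{U}$ (it is quasi-isomorphic to a bounded complex of finitely generated projectives, each a summand of a free module), so thickness forces $\sigma(f_{U},u^{*}E,u^{*}F)$ to be an isomorphism for all perfect $E$; letting $U$ vary gives the claim.

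For the second assertion, fix $F$ and suppose $f$ has finite Tor-dimension. Then $f^{*}$, $-\otimes f^{!}F$ and $-\otimes F$ all commute with arbitrary coproducts, and — this is where the finite Tor-dimension hypothesis enters — so does $f^{!}$, by a theorem of Neeman (\cite{Neeman}). Therefore $\Escr_{F}$ is a localizing subcategory of $D_{\Qch}(Y)$; by the first assertion it contains all perfect complexes, and since $Y$ is quasi-compact and separated the localizing subcategory generated by the perfect complexes is all of $D_{\Qch}(Y)$ (Thomason--Trobaugh; Neeman). Hence $\Escr_{F}=D_{\Qch}(Y)$, and $F$ was arbitrary.

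The two d\'evissages are routine; the one genuinely substantial ingredient, which I would take from \cite{Lipman,Neeman} rather than reprove, is that $f^{!}$ commutes with arbitrary coproducts when $f$ has finite Tor-dimension, together with the structural facts that $\sigma$ is natural and compatible with triangles and with flat base change. As an alternative for the first assertion one can avoid the localization altogether: reduce via a compactification $f=\bar f\circ j$, so that $f^{!}\cong j^{*}\bar f^{!}$ since $j$ is flat, to the proper case, and there identify $\sigma(\bar f,E,F)$ after adjunction with the classical projection isomorphism $R\bar f_{*}G\otimes E\cong R\bar f_{*}(G\otimes\bar f^{*}E)$, valid for $E$ perfect.
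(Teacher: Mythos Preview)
The paper does not supply its own proof of this lemma: it is simply quoted from \cite[Proposition~7.11]{Neeman}, so there is nothing in the paper to compare your argument against. Your d\'evissage is correct and is essentially the standard one found in \cite{Neeman} and \cite{Lipman}: fix $F$, observe that the class of $E$ for which $\sigma(f,E,F)$ is an isomorphism is thick, contains $\Ol_Y$, hence contains all perfect complexes (after localizing on $Y$ or, equivalently, via the compactification reduction you sketch at the end); then, when $f$ has finite Tor-dimension, invoke Neeman's theorem that $f^!$ preserves coproducts to upgrade ``thick'' to ``localizing'' and conclude since perfect complexes generate $D_{\Qch}(Y)$. The only point worth flagging is that the compatibility $v^*\sigma(f,E,F)\cong\sigma(f_U,u^*E,u^*F)$, and the fact that the specific map $\sigma$ constructed in \cite{Neeman} really is the identity when $E=\Ol_Y$, are not entirely formal and are precisely the ``structural facts'' you defer to \cite{Lipman,Neeman}; since you acknowledge this, the sketch is honest.
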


%\begin{remark}
%The way the proof works is that this is proved to be true for perfect, and then one extends by taking coproducts. To do this one needs $f^!$ to respect coproducts. This is true for $f$ finite Tor-dimension but I don't know if it is true in general.
%\end{remark}

%%%%%%%%%%%%%%%%%%%%%%%%%%
\subsection{Relative perfect complexes}

We remind the reader of the following definition:
\begin{definition} \cite[Definition 4.1]{Illusie}
Let $f:X\to Y$ be a morphism. A complex $F\in D^b_{\coh}(X)$ is said to be \emph{perfect relative to $f$} if it is locally isomorphic in the derived category of $f^{-1}\Ol_Y$-modules to a bounded complex of flat $f^{-1}\Ol_Y$-modules.
\end{definition}
In particular, $f$ is perfect if $\Ol_X$ is perfect over $f$. Perfection over $\id_X$ is equivalent to perfection in $D(X)$. Moreover, if $f$ is a perfect morphism then a perfect complex in $D^b_{\coh}(X)$ is $f$-perfect (see \cite[Corollaire 4.5.1]{Illusie}). More generally, if $f$ is locally of finite type and finite Tor-dimension and $Y$ is Noetherian, then by \cite[Example 1.3]{Illusie} and \cite[Corollaire 4.5.1]{Illusie} a perfect complex in $D^b_{\coh}(X)$ is perfect with respect to $f$.

\begin{remark}
This notion satisfies two-out-of-three (see \cite[Notation 4.2]{Illusie}).
\end{remark}

%\begin{remark}
%This characterization might be useful to keep in mind. $f$ is perfect if and only if $f$ is flat and locally of finite presentation. This is \cite[Corollary 4.3.1]{Illusie}
%\end{remark}

\begin{lemma}\cite[Corollaire 3.4]{Illusie} \label{tensor with perfect}
Let $f:X\to Y$ be a morphism of schemes. If $\Fscr$ is a perfect complex on $X$ and $\Escr$ is perfect with respect to $f$, then $\Fscr\otimes \Escr$ is perfect with respect to $f$.
\end{lemma}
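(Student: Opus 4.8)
This is \cite[Corollaire 3.4]{Illusie}; here is the shape of the argument I would give. Both hypotheses and the conclusion are local on $X$, and the formation of $\Fscr\otimes\Escr$ commutes with restriction to opens, so I would fix a point of $X$, shrink to an open $U$ on which $\Fscr$ is represented by a bounded complex $P^\bullet$ of finite free $\Ol_X$-modules, and on which $\Escr$ is isomorphic, in $D(f^{-1}\Ol_Y|_U)$, to a bounded complex $Q^\bullet$ of flat $f^{-1}\Ol_Y|_U$-modules. Choosing a bounded complex of coherent sheaves $E^\bullet$ representing $\Escr|_U$, the derived tensor product $\Fscr\otimes\Escr$ is on $U$ computed by the honest complex $P^\bullet\otimes_{\Ol_X}E^\bullet$, since $P^\bullet$ is termwise flat over $\Ol_X$; this complex visibly lies in $D^b_{\coh}(U)$, so it only remains to produce on $U$ an isomorphism in $D(f^{-1}\Ol_Y|_U)$ between it and a bounded complex of flat $f^{-1}\Ol_Y|_U$-modules.

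To do this I would filter $P^\bullet$ by its brutal truncations $\sigma_{\ge p}P^\bullet$. Because the differentials of $P^\bullet$ are $\Ol_X$-linear, hence $f^{-1}\Ol_Y$-linear, and the successive quotients $P^p[-p]$ are finite free, hence flat, over $\Ol_X$, tensoring this finite filtration with $E^\bullet$ yields a finite filtration of $P^\bullet\otimes_{\Ol_X}E^\bullet$ by subcomplexes of $f^{-1}\Ol_Y|_U$-modules whose successive quotients are $P^p[-p]\otimes_{\Ol_X}E^\bullet\cong (E^\bullet[-p])^{\oplus n_p}$ with $n_p=\operatorname{rank}P^p$. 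In $D(f^{-1}\Ol_Y|_U)$ each such quotient is isomorphic to $(Q^\bullet[-p])^{\oplus n_p}$, a bounded complex of flat $f^{-1}\Ol_Y|_U$-modules, and it has coherent cohomology, so it is $f$-perfect. Ascending the filtration one step at a time and applying two-out-of-three (the Remark following the definition of relative perfection) to the distinguished triangles linking consecutive stages shows that $P^\bullet\otimes_{\Ol_X}E^\bullet$ is $f$-perfect on $U$, which completes the proof.

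The one point requiring care — and what I would regard as the heart of the matter — is that the tensor product is formed over $\Ol_X$ while relative perfection refers to the $f^{-1}\Ol_Y$-structure, and the endofunctor $(-)\otimes_{\Ol_X}\Fscr$ does not descend along restriction of scalars from $\Ol_X$-modules to $f^{-1}\Ol_Y$-modules; this is precisely where the hypothesis that $\Fscr$ be \emph{perfect}, rather than merely of finite Tor-dimension over $\Ol_X$, is used. The brutal-truncation filtration sidesteps the issue cleanly: it is a filtration by genuine $f^{-1}\Ol_Y$-subcomplexes whose graded pieces involve $\Escr$ only through finite direct sums and shifts, so the relative-perfection hypothesis on $\Escr$ passes to them verbatim, and closure of relative perfection under extensions finishes the job.
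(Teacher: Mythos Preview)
The paper does not supply its own proof of this lemma; it merely cites \cite[Corollaire~3.4]{Illusie}. Your argument is correct and is essentially the standard one found there: localize so that $\Fscr$ becomes strictly perfect, then use the brutal-truncation filtration together with the two-out-of-three property to reduce to the trivial case $\Fscr=\Ol_X^{\oplus n}[-p]$.
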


Before we continue, let us give some examples of complexes that are perfect relative to a morphism. We will use the following:

\begin{proposition}\cite[Corollaire 3.7.1, Proposition 4.8]{Illusie}\label{diagram}
Let $S$ be a locally Noetherian scheme, and consider a diagram of schemes
$$\xymatrix{
X \ar[rr]^f \ar[dr]_g && Y \ar[dl]^h\\
& S
}$$
Then
\begin{enumerate}
\item If $f$ is a quasi-compact, quasi-separated morphism of schemes that are quasi-compact over $S$, and $E\in D^b_{\Qch}(X)$ is perfect with respect to $g$, then $f_* E$ is perfect with respect to $h$.
\item If $f$ is proper, $X$ and $Y$ are locally of finite type over $Z$, and $E\in D^b(X)$ is perfect with respect to $g$, then $f_* E$ is perfect with respect to $h$.
\end{enumerate}

with $f$ proper, $X$ and $Y$ locally of finite type over $Z$. If $E\in D^b(X)$ is perfect with respect to $g$ then $f_* E$ is perfect with respect to $h$.
\end{proposition}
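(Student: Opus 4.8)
The plan is to split the statement into the two conditions whose conjunction is relative perfectness: for a complex $K$ on $Y$, that $K$ be pseudo-coherent relative to $h$, and that $K$ be of finite Tor-dimension over $S$ (cf. \cite[Definition 4.1]{Illusie}). The pseudo-coherence condition is the substantive point, and it is where the finiteness hypotheses on $f$ in (1) and (2) are genuinely used; the Tor-dimension condition is formal and I would deduce it from the projection formula. Since both conditions are local on $S$, we may assume $S$ affine and Noetherian.

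I would dispose of the Tor-dimension half first. Since $S$ is Noetherian, it suffices to bound, uniformly in $s\in S$, the cohomological amplitude of $f_*(E)\otimes^L_{\Ol_Y}h^*\kappa(s)$. The projection formula for $f$ — valid because $f$ is quasi-compact and quasi-separated, and a fortiori when $f$ is proper — gives the base-change isomorphism
$$f_*(E)\otimes^L_{\Ol_Y}h^*\kappa(s)\;\cong\;f_*\bigl(E\otimes^L_{\Ol_X}g^*\kappa(s)\bigr).$$
Since $E$ is perfect relative to $g$ and $X$ is quasi-compact over $S$, the right-hand side has cohomology concentrated in a range $[a,b]$ independent of $s$; and since $f$ is quasi-compact and quasi-separated with quasi-compact source (respectively, proper of finite type), $f_*$ has cohomological amplitude bounded by an integer $N$ depending only on $f$. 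Hence the left-hand side has cohomology in $[a,b+N]$ for all $s$, and so $f_*E$ is of finite Tor-dimension over $S$.

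For the pseudo-coherence half, in case (2) the properness of $f$ together with Grothendieck's coherence theorem gives $f_*E\in D^b_{\coh}(Y)$, and since $Y$ is of finite type over the Noetherian scheme $S$ this already implies that $f_*E$ is pseudo-coherent relative to $h$; combined with the previous paragraph, $f_*E$ is $h$-perfect. In case (1) the hypotheses are too weak to place $f_*E$ in $D^b_{\coh}(Y)$, and here I would appeal directly to \cite[Corollaire 3.7.1]{Illusie}, which shows that relative pseudo-coherence is preserved by $f_*$ under just these quasi-compactness and quasi-separatedness assumptions; \cite[Proposition 4.8]{Illusie} then records the conclusion in the perfect case. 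Assembling the two halves proves the proposition.

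The main obstacle is the pseudo-coherence statement. The Tor-dimension estimate is essentially automatic once one has the projection formula and a uniform amplitude bound for $f_*$, but relative pseudo-coherence is invisible to such soft manipulations: it relies on Illusie's more delicate local analysis on the target and is genuinely sensitive to the finiteness imposed on $f$, which is why the argument ultimately leans on \cite[Corollaire 3.7.1, Proposition 4.8]{Illusie} for that ingredient.
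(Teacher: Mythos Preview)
The paper does not supply a proof of this proposition; it is recorded purely as a citation of \cite[Corollaire 3.7.1, Proposition 4.8]{Illusie}, so there is nothing in the paper to compare your argument against beyond the bare reference. Your proposal therefore goes further than the paper by sketching how the cited results are actually proved. The decomposition into relative pseudo-coherence plus finite Tor-amplitude is exactly Illusie's framework, and your handling of the Tor-amplitude half via the projection formula together with a uniform bound on the cohomological amplitude of $Rf_*$ is correct and standard (the reduction to $S$ affine makes $X$ quasi-compact, so such a bound exists). For case~(2) your pseudo-coherence argument via Grothendieck's coherence theorem is also correct. For case~(1) you ultimately, like the paper, fall back on the citation to Illusie for the pseudo-coherence step, so on that point the proposal and the paper's treatment coincide.
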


\begin{example}
If $X$ is a scheme over $S$, $S$ is Noetherian and $X$ is locally of finite type over $S$, then $\Ol_{\Delta}\in D^b(X\times_S X)$ is $\pi_X$-perfect, where $\pi_X$ is the projection from $X\times_S X$ to one of the factors. This follows from Proposition \ref{diagram} with $f:X\to X\times_S X$ being the diagonal embedding, $Z=X$.
\end{example}

\begin{example}
If $C\xrightarrow{i} X\times_S X$ is a closed immersion, again by Proposition \ref{diagram} we have that $i_* \Ol_C$ is $\pi_X$-perfect if the composition $\pi_X\circ i: C\to X$ is a perfect morphism.
\end{example}

Perfect complexes and $f$-perfect complexes behave well under $\RHomscr$:

\begin{proposition}\cite[Proposition 2.3.9]{reflexivity}\label{perfect is mantained}
Let $f:X\to Y$ be a morphism of schemes. If $\Escr$ is a perfect complex on $Y$ then $\RHomscr(-,f^!\Escr)$ takes $f$-perfect complexes to $f$-perfect complexes.
\end{proposition}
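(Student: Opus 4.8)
The plan is to reduce the statement, in two steps, to a classical fact about dualizing complexes over a field.

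\emph{Step 1: reduction to the case $\Escr=\Ol_Y$.} Since $\Escr$ is perfect, so are $f^{*}\Escr$ and its dual $(f^{*}\Escr)^{\vee}:=\RHomscr_{X}(f^{*}\Escr,\Ol_{X})$. By Lemma~\ref{ref-4.4-2}, applied with the perfect complex $\Escr$ and $F=\Ol_Y$, there is an isomorphism $f^{!}\Escr\cong f^{*}\Escr\otimes f^{!}\Ol_{Y}$, and, $f^{*}\Escr$ being perfect, a standard property of perfect complexes gives $f^{*}\Escr\otimes f^{!}\Ol_Y\cong\RHomscr_{X}\bigl((f^{*}\Escr)^{\vee},f^{!}\Ol_Y\bigr)$. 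Feeding this into Lemma~\ref{Spaltenstein} we obtain, for any $f$-perfect $\Fscr$,
$$\RHomscr_{X}\bigl(\Fscr,f^{!}\Escr\bigr)\;\cong\;\RHomscr_{X}\bigl(\Fscr\otimes(f^{*}\Escr)^{\vee},\,f^{!}\Ol_Y\bigr),$$
and $\Fscr\otimes(f^{*}\Escr)^{\vee}$ is again $f$-perfect by Lemma~\ref{tensor with perfect}. Hence it suffices to prove that $\RHomscr_{X}(-,f^{!}\Ol_Y)$ sends $f$-perfect complexes to $f$-perfect complexes.

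\emph{Step 2: reduction to the fibres of $f$.} Because $f$ is essentially of finite type over a Noetherian scheme, $f^{!}\Ol_Y\in D^{+}_{\coh}(X)$, so for $\Fscr\in D^{b}_{\coh}(X)$ the complex $\Gscr:=\RHomscr_{X}(\Fscr,f^{!}\Ol_Y)$ lies in $D^{+}_{\coh}(X)$. Being $f$-perfect is local on $X$, and for a complex in $D^{+}_{\coh}(X)$ it can be checked on fibres: $\Gscr$ is $f$-perfect iff its derived restriction to each fibre $X_{y}:=X\times_{Y}\spec k(y)$ is bounded (the fibrewise criterion for relative perfection, valid over a Noetherian base). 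Granting that, for $\Fscr$ perfect relative to $f$, the formation of $f^{!}\Ol_Y$ and of $\RHomscr_{X}(\Fscr,f^{!}\Ol_Y)$ commutes with the base change $\spec k(y)\to Y$, the restriction of $\Gscr$ to $X_{y}$ is identified with $\RHomscr_{X_{y}}\bigl(\Fscr|_{X_{y}},\,f_{y}^{!}k(y)\bigr)$, where $f_{y}:X_{y}\to\spec k(y)$. Now $f_{y}^{!}k(y)$ is a dualizing complex on the finite type $k(y)$-scheme $X_{y}$, and $\RHomscr$ into a dualizing complex carries $D^{b}_{\coh}$ to $D^{b}_{\coh}$; since over a field a complex is perfect relative to the structure morphism precisely when it has coherent bounded cohomology, this gives exactly what is needed.

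\emph{The main obstacle} is the base-change input of Step~2. For a base change $v:Y'\to Y$, with $X'=X\times_{Y}Y'$ and $u:X'\to X$ the induced map, one needs the natural maps
$$u^{*}f^{!}\Ol_Y\longrightarrow f_{Y'}^{!}\Ol_{Y'},\qquad u^{*}\RHomscr_{X}(\Fscr,f^{!}\Ol_Y)\longrightarrow\RHomscr_{X'}\bigl(u^{*}\Fscr,u^{*}f^{!}\Ol_Y\bigr)$$
to be isomorphisms even though $v$ need not be flat; this is precisely where the hypothesis that $\Fscr$ be perfect relative to $f$ is used, both to make $\RHomscr$ commute with derived pullback and as one of the situations in which the base-change theorem for the twisted inverse image pseudofunctor applies. (Alternatively one may factor $f$ locally as an essentially smooth morphism followed by a closed immersion $i$ and treat the two cases separately, using for the closed immersion that $i_{*}\RHomscr_{X}(\Fscr,i^{!}\Ol_Y)\cong\RHomscr_{Y}(i_{*}\Fscr,\Ol_Y)$ together with Proposition~\ref{diagram}(2), and recovering relative perfection downstairs through $i_{*}$; the smooth case, where $f^{!}\Ol_Y$ is an invertible complex, still rests on identifying relative perfection with perfection for a smooth morphism, which is the non-formal point along that route.)
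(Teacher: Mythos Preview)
The paper does not give a proof of this proposition; it is simply cited from \cite[Proposition~2.3.9]{reflexivity}. So there is no in-paper argument to compare against, and I will assess your attempt on its own terms.

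Your Step~1 is fine: the reduction to $\Escr=\Ol_Y$ via Lemma~\ref{ref-4.4-2}, the perfect-complex identity $f^{*}\Escr\otimes(-)\cong\RHomscr((f^{*}\Escr)^{\vee},-)$, Lemma~\ref{Spaltenstein}, and Lemma~\ref{tensor with perfect} is correct and clean.

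Step~2 has a genuine gap, which you in effect flag yourself under ``the main obstacle'' without resolving it. The problem is the first of your two base-change maps: for the inclusion $v:\spec k(y)\to Y$ of a residue field, the comparison $u^{*}f^{!}\Ol_Y\to f_{y}^{!}k(y)$ is \emph{not} an isomorphism in general. The known base-change theorems for $(-)^{!}$ require $v$ to be flat, or of finite Tor-dimension, or the fibre square to be Tor-independent; none of these holds for an arbitrary closed point of an arbitrary Noetherian $Y$. Your stated justification---that relative perfection of $\Fscr$ is ``one of the situations in which the base-change theorem for the twisted inverse image pseudofunctor applies''---cannot be right, because that first map does not involve $\Fscr$ at all. (Relative perfection of $\Fscr$ \emph{is} relevant to your second map, making $\RHomscr_{X}(\Fscr,-)$ commute with $u^{*}$, but that does not help with the first.) As written, the fibrewise strategy therefore does not go through.

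Your parenthetical alternative---factor $f$ locally as a closed immersion $i$ followed by an essentially smooth morphism $p$, and treat the two cases separately---is the workable route, and is in spirit how the cited reference proceeds (reducing to local algebra after such a factorization). For essentially smooth $p$, $p^{!}\Ol_Y$ is an invertible complex and $p$-perfect coincides with perfect, so $\RHomscr(-,p^{!}\Ol_Y)$ manifestly preserves the condition. For a closed immersion $i$ one uses $i_{*}\RHomscr_{X}(\Fscr,i^{!}\Gscr)\cong\RHomscr_{Y}(i_{*}\Fscr,\Gscr)$ together with Proposition~\ref{diagram} and the fact that $i_{*}$ detects $f$-perfection. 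You sketch this correctly but leave both cases unfinished; the non-formal input you mention (identifying $p$-perfect with perfect for essentially smooth $p$) is exactly the content of \cite[Corollaire~4.5.2]{Illusie} and its converse, and should be cited rather than granted. In short: abandon the fibrewise reduction and complete the factorization argument instead.
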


\begin{definition}\cite[Definition 1.3.1]{reflexivity}
Given $\Fscr\in D(X)$ and $\Pscr\in D^b_{\coh}(X)$, we say that $\Pscr$ is derived $\Fscr$-reflexive if $\RHomscr(\Pscr,\Fscr)\in D^b_{\coh}(X)$ and the canonical morphism
$$\delta_{\Fscr}^{\Pscr}: \Pscr \to \RHomscr_X(\RHomscr_X(\Pscr,\Fscr),\Fscr)$$
corresponding via Lemma \ref{Spaltenstein} to the natural composition
$$\Pscr\otimes_X \RHomscr_X(\Pscr,\Fscr)\xrightarrow{\cong}\RHomscr_X(\Pscr,\Fscr)\otimes_X \Pscr \xrightarrow{\epsilon} \Fscr,$$
where $\epsilon$ is the evaluation map, is an isomorphism.
\end{definition}

\begin{lemma}\label{reflexive} \cite[Corollaire 4.9.2]{Illusie}
If $f:X\to Y$ is a compactifiable morphism, then each complex that is perfect over $f$ is derived $f^!\Ol_Y$-reflexive.
\end{lemma}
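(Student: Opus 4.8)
The plan is to deduce this by combining Proposition \ref{perfect is mantained} with a local factorization of $f$. Two things must be checked: that $\RHomscr_X(\Pscr,f^!\Ol_Y)$ lies in $D^b_{\coh}(X)$, and that the biduality map $\delta_{f^!\Ol_Y}^{\Pscr}$ is an isomorphism. The first is immediate: applying Proposition \ref{perfect is mantained} to the trivially perfect complex $\Escr=\Ol_Y$ shows that $\RHomscr_X(\Pscr,f^!\Ol_Y)$ is again perfect relative to $f$, and hence lies in $D^b_{\coh}(X)$; applying it once more gives the same for $\RHomscr_X(\RHomscr_X(\Pscr,f^!\Ol_Y),f^!\Ol_Y)$, so both source and target of $\delta_{f^!\Ol_Y}^{\Pscr}$ are of the expected type.

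For the second point I would argue locally on $X$, since both $f$-perfection and the property that $\delta_{f^!\Ol_Y}^{\Pscr}$ be an isomorphism are local; using $j^!=j^*$ for open immersions, Lemma \ref{tim}(3), and the compatibility of the twisted inverse image with restriction to opens of the target, one may in addition take $Y$ affine. Then, on a neighbourhood of a given point of $X$ (and absorbing a localization just as the open immersions above), factor $f$ as $X\xrightarrow{\,i\,}Z\xrightarrow{\,p\,}Y$ with $Z=\mathbb{A}^n_Y$, $p$ the smooth projection and $i$ a closed immersion. Since $\Ol_Y\in D^+$, the composition rule gives $f^!\Ol_Y=i^!p^!\Ol_Y$, and as $p$ is smooth of relative dimension $n$ one has $p^!\Ol_Y\cong\Lscr[n]$ with $\Lscr$ an invertible sheaf on $Z$. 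The functor $i_*$ is exact and faithful on sheaves, hence reflects isomorphisms, so $\delta_{f^!\Ol_Y}^{\Pscr}$ is an isomorphism if and only if its image under $i_*$ is; Grothendieck duality for the closed (proper) immersion $i$, $i_*\RHomscr_X(-,i^!\Gscr)\cong\RHomscr_Z(i_*(-),\Gscr)$, then identifies that image---compatibly with the evaluation maps, by a diagram chase using Lemma \ref{Spaltenstein}---with the biduality map of $i_*\Pscr$ relative to $p^!\Ol_Y$ on $Z$. Since $\Lscr[n]$ is an invertible complex, $\RHomscr_Z(-,\Lscr[n])\cong\RHomscr_Z(-,\Ol_Z)\otimes\Lscr[n]$, so reflexivity with respect to $\Lscr[n]$ is equivalent to reflexivity with respect to $\Ol_Z$; it thus suffices to show that $i_*\Pscr$ is derived $\Ol_Z$-reflexive.

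To conclude, $i_*\Pscr$ is perfect relative to $p$ by Proposition \ref{diagram}(1), applied with base $S=Y$ to the closed (quasi-compact, quasi-separated) immersion $i$ and to $\Pscr$, which is perfect relative to $pi=f$. Because $p$ is smooth, hence flat with regular fibres, a complex of $D^b_{\coh}(Z)$ that is perfect relative to $p$ is in fact perfect on $Z$: this is the fibrewise characterization of relative perfection (cf.\ \cite[Example 1.3, Corollaire 4.5.1]{Illusie}), the fibres of $p$ being regular. Finally, derived $\Ol_Z$-reflexivity is a local condition, and the derived $\Ol_Z$-reflexive complexes form a thick subcategory of $D(Z)$ containing $\Ol_Z$, so it contains every perfect complex; in particular $i_*\Pscr$ is derived $\Ol_Z$-reflexive, which completes the argument. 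I expect the main obstacle to be the passage from relative perfection over the smooth morphism $p$ to absolute perfection on $Z$; a secondary, bookkeeping-type difficulty is tracking the $i_*$ and $i^!$ identifications precisely enough to transport the biduality isomorphism from $Z$ back to $X$.
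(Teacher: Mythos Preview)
The paper does not give its own proof of this lemma; it merely quotes \cite[Corollaire 4.9.2]{Illusie} and adds a one-line remark about when the compactifiability hypothesis is automatic. There is therefore nothing in the paper to compare your argument against. Your sketch is, in outline, a valid reconstruction of how such a reduction goes: check boundedness via Proposition~\ref{perfect is mantained}, localize so that $f$ factors as a closed immersion $i$ into $Z=\mathbb{A}^n_Y$ followed by the smooth projection $p$, transport the biduality question along $i_*$ using Grothendieck duality for $i$, and finish by showing $i_*\Pscr$ is perfect on $Z$ (hence $\Ol_Z$-reflexive).

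Two places deserve tightening. First, your citation for the step ``$i_*\Pscr$ is $p$-perfect $\Rightarrow$ $i_*\Pscr$ is perfect on $Z$'' is off: \cite[Corollaire 4.5.1]{Illusie} (as recalled in the paper) gives the \emph{opposite} implication, that perfect complexes are $p$-perfect when $p$ has finite Tor-dimension. The direction you need is nonetheless true for smooth $p$, and follows from the local identity
\[
E\otimes^L_{\Ol_{Z,z}} k(z)\;\cong\;\bigl(E\otimes^L_{\Ol_{Y,y}} k(y)\bigr)\otimes^L_{\Ol_{Z,z}\otimes_{\Ol_{Y,y}} k(y)} k(z),
\]
where the first tensor factor is bounded by $p$-perfection of $E$ and the second is a perfect module over the regular local fibre ring; you should either supply this argument or point to the correct place in \cite{Illusie}. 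Second, the assertion that $i_*\delta^{\Pscr}_{f^!\Ol_Y}$ matches $\delta^{i_*\Pscr}_{p^!\Ol_Y}$ under the duality identifications $i_*\RHomscr_X(-,i^!(-))\cong\RHomscr_Z(i_*(-),-)$ is exactly the content that makes the reduction work; calling it ``a diagram chase using Lemma~\ref{Spaltenstein}'' is accurate but terse, and you correctly flag it as a bookkeeping point. With those two items addressed, the argument stands.
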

In particular, by \cite{Nagata} this is true for a separated morphism of finite type between quasi-compact quasi-separated schemes.

\begin{lemma}\label{ref-4.1-1} \cite[Lemma 6.6]{reduction}
Let $X$, $Y$ be Noetherian schemes, $g:X\to Y$ be a morphism of finite Tor dimension. Let $\Pscr$ be a $g$-perfect complex. Let $\Escr$ a perfect complex over $Y$ and $\Fscr\in D_{\Qch}(Y)$. Then we have an isomorphism
$$\RHomscr_X(\Pscr,g^! \Escr)\otimes_X g^*\Fscr \cong \RHomscr_X(\Pscr,g^!\Escr\otimes g^*\Fscr)$$
\end{lemma}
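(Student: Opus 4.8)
\emph{Strategy.} The plan is to reduce to the case $\Escr=\Ol_Y$, then to the case of a proper morphism by a compactification argument, and to treat the proper case by a Yoneda argument against perfect complexes on $X$ together with Grothendieck duality. For the first reduction: since $\Escr$ is perfect, Lemma~\ref{ref-4.4-2} gives $g^!\Escr\cong g^*\Escr\otimes g^!\Ol_Y$ with $g^*\Escr$ perfect on $X$; as tensoring with a perfect complex commutes with $\RHomscr_X(\Pscr,-)$ (for $\Lscr$ perfect one has $\RHomscr_X(\Pscr,\Mscr\otimes\Lscr)\cong\RHomscr_X(\Pscr\otimes\RHomscr_X(\Lscr,\Ol_X),\Mscr)\cong\RHomscr_X(\Pscr,\Mscr)\otimes\Lscr$ by Lemma~\ref{Spaltenstein}), both sides of the asserted isomorphism acquire a common tensor factor $g^*\Escr$, and it suffices to prove
$$\RHomscr_X(\Pscr,g^!\Ol_Y)\otimes_X g^*\Fscr\;\cong\;\RHomscr_X(\Pscr,g^!\Ol_Y\otimes g^*\Fscr).$$
Write $\Qscr:=\RHomscr_X(\Pscr,g^!\Ol_Y)$ — again $g$-perfect by Proposition~\ref{perfect is mantained} — and let $\theta_\Fscr\colon\Qscr\otimes g^*\Fscr\to\RHomscr_X(\Pscr,g^!\Ol_Y\otimes g^*\Fscr)$ be the canonical map adjoint to the evaluation pairing; we must show $\theta_\Fscr$ is an isomorphism.

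\emph{The proper case.} Assume $g$ is proper, so $g^!=g^\times$ is right adjoint to $g_*$. Since $X$ is quasi-compact and quasi-separated, it is enough to check that $\Hom_X(\Escr,\theta_\Fscr)$ is bijective, for all shifts, for every perfect complex $\Escr$ on $X$. On the source, writing $\Escr^\vee=\RHomscr_X(\Escr,\Ol_X)$ and using the manipulation above,
$$\Hom_X(\Escr,\Qscr\otimes g^*\Fscr)\cong\Hom_X\bigl(\Ol_X,\RHomscr_X(\Pscr\otimes\Escr,g^!\Ol_Y)\otimes g^*\Fscr\bigr);$$
pushing to $Y$ (adjunction $g^*\dashv g_*$), applying the projection formula, and then sheafy Grothendieck duality $g_*\RHomscr_X(-,g^\times\Ol_Y)\cong\RHomscr_Y(g_*-,\Ol_Y)$, this becomes $\Hom_Y\bigl(\Ol_Y,\RHomscr_Y(g_*(\Pscr\otimes\Escr),\Ol_Y)\otimes\Fscr\bigr)$. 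Now $\Pscr\otimes\Escr$ is $g$-perfect by Lemma~\ref{tensor with perfect}, so $g_*(\Pscr\otimes\Escr)$ is perfect on $Y$ by Proposition~\ref{diagram}, and the last group is just $\Hom_Y(g_*(\Pscr\otimes\Escr),\Fscr)$. On the target, $g^!\Ol_Y\otimes g^*\Fscr\cong g^!\Fscr$ by Lemma~\ref{ref-4.4-2} (here one uses that $g$ has finite Tor-dimension), so $\Hom_X(\Escr,\RHomscr_X(\Pscr,g^!\Ol_Y\otimes g^*\Fscr))=\Hom_X(\Escr\otimes\Pscr,g^!\Fscr)=\Hom_Y(g_*(\Escr\otimes\Pscr),\Fscr)$ by the $g_*\dashv g^!$ adjunction. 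The two groups coincide, compatibly with $\theta_\Fscr$, so $\theta_\Fscr$ is an isomorphism when $g$ is proper.

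\emph{From proper to general.} For general $g$ (separated, essentially of finite type, of finite Tor-dimension) I would choose a compactification $X\xrightarrow{\,j\,}\bar X\xrightarrow{\,\bar g\,}Y$ with $j$ an open immersion and $\bar g$ proper, arranged so that $\bar g$ is also of finite Tor-dimension and $\Pscr$ extends to a $\bar g$-perfect complex $\bar\Pscr$ on $\bar X$ with $j^*\bar\Pscr=\Pscr$. Since $j$ is flat of finite Tor-dimension, $g^!=j^*\bar g^!$ and $g^*=j^*\bar g^*$, so applying $j^*$ to the isomorphism $\theta_\Fscr$ for $(\bar g,\bar\Pscr)$ and using the compatibilities $j^*\RHomscr_{\bar X}(-,-)\cong\RHomscr_X(j^*-,j^*-)$ and $j^*(-\otimes-)\cong(j^*-)\otimes(j^*-)$ of Lemma~\ref{tim} yields the statement for $(g,\Pscr)$. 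I expect the real obstacle to be the existence of such a compactification — extending the relatively perfect complex $\Pscr$ and keeping the Tor-dimension of the compactified morphism $\bar g$ finite; granting this, the rest is bookkeeping with Lemmas~\ref{Spaltenstein}, \ref{tim}, \ref{ref-4.4-2}, Proposition~\ref{diagram}, and Grothendieck duality.
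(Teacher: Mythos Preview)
The paper does not give its own proof of this lemma: it simply cites \cite[Lemma~6.6]{reduction} and remarks that the boundedness hypothesis $\Fscr\in D^+_{\Qch}(Y)$ in that reference can be dropped because of the more recent technology of \cite{Neeman}. So there is no proof in the paper to compare against beyond that remark.

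Your argument in the proper case is essentially correct and is very close in spirit to the proof the paper \emph{does} give for the projective analogue (Lemma~\ref{pull out the tensor}): there the authors test after twisting by $\Ol_X(n)$ and pushing to $Y$ via Lemma~\ref{beilinson}, while you test against perfect generators of $D_{\Qch}(X)$ and push to $Y$; in both cases the point is that $g_*(\Pscr\otimes\Escr)$ is perfect on $Y$ so that dualizing commutes with tensoring by $\Fscr$. One cosmetic issue: you reuse the letter $\Escr$ for both the perfect complex on $Y$ in the statement and the perfect test object on $X$, which is confusing.

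The genuine gap is exactly where you flag it. For the passage from proper to general you need a compactification $\bar g$ of finite Tor-dimension together with an extension of $\Pscr$ to a $\bar g$-perfect complex $\bar\Pscr$ on $\bar X$. Neither is available in general: finite Tor-dimension is not preserved under compactification, and extending a relatively perfect complex across the boundary is a hard problem. Pushing forward by $j_*$ (as in Proposition~\ref{diagram}(1)) does give a $\bar g$-perfect complex, but you then lose control of the Tor-dimension of $\bar g$, which your proper-case argument uses via Lemma~\ref{ref-4.4-2}. The route taken in \cite{reduction} and implicitly endorsed by the paper avoids compactification altogether: one works directly with the twisted inverse image $g^!$ for non-proper $g$ (as developed in \cite{Lipman,Neeman}), so that Lemma~\ref{ref-4.4-2} applies to $g$ itself, and then the isomorphism $\RHomscr_X(\Pscr,g^!\Ol_Y)\otimes g^*\Fscr\to\RHomscr_X(\Pscr,g^!\Ol_Y\otimes g^*\Fscr)$ is handled by a local argument using the $g$-perfection of $\Pscr$. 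The compactification detour is not needed, and as written your reduction does not go through.
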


In the given reference the authors give a proof under the more restrictive, but unnecessary, hypothesis that $\Fscr\in D_{\Qch}^+(Y)$. This is solely because the general technology was not available at the time. 
%A hint on how to extend the proof in the general case can be found in the proof of  \cite[Lemma 4.1.5]{relation}. We explicit here the changes to the proof that are necessary in order for it to work in this more general setting. Une uses the definition of $g^!: D_{\Qch}(Y)\to D_{\Qch}(X)$ in \cite[Theorem 1.2]{Neeman}, and the fact that if $f:X\to Y$ is proper then $f^!$ coincides with $f^{\times}$. Moreover, using the notation in the proof in loc.cit., since if $g=hi$ with $h$ essentially smooth and $i$ a closed immersion then $i$ is finite Tor dimension by \cite[Proposition 3.6]{Illusie}, we have that $g^!=i^!h^!$. Finally we have $g^!(E'\stackrel{L}{\otimes}_{Y'}F')=g^!E'\stackrel{L}{\otimes}_{X'} Lg^* F'$ by Lemma \ref{ref-4.4-2} above.

\subsection{The projective case}
In the projective case, some of the results we mentioned above are true without the hypothesis of finite Tor dimensionality. In particular, we will prove an analogue of Lemma \ref{ref-4.1-1} in Lemma \ref{pull out the tensor} below. This requires the following lemma:

\begin{lemma}\cite[Proposition 2.5]{spaniards2}, \cite[Lemma 1.4]{spaniards1} \label{beilinson}
Let $f:X\to Y$ a projective morphism and $\Ol(1)$ be a relatively very ample line bundle on $X$, let $\Escr,\Fscr \in D_{\Qch}(X)$. Then
\begin{enumerate}
\item $\Escr \in D^b_{\Qch}(X)$ if and only if $f_*(\Escr(n))\in D^b_{\Qch}(Y)$ for every integer $n$. 
\item  A morphism $g: \Escr\to \Fscr$ is an isomorphism if and only if the induced morphism $f_*(\Escr(n))\to f_*(\Fscr(n))$ is an isomorphism in $D_{\Qch}(Y)$ for every integer $n$.
\end{enumerate}
\end{lemma}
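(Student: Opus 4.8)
The plan is to derive both parts from a single structural fact, via a relative Beilinson resolution. Since $f$ is projective and $\Ol_X(1)$ is $f$-very ample, fix a closed immersion $i\colon X\hookrightarrow P:=\mathbb{P}(\Vscr)$ over $Y$, where $\Vscr$ is a locally free sheaf of rank $N+1$ on $Y$ with $i^*\Ol_P(1)\cong\Ol_X(1)$; write $q\colon P\to Y$ for the structure morphism, which is smooth and in particular flat, so that $f=q\circ i$. Two of the four implications are immediate: in (2) the ``only if'' direction holds because $\Escr\mapsto f_*(\Escr(n))$ is functorial, and in (1) the implication ``$\Longrightarrow$'' holds because a projective morphism has finite cohomological dimension (via $f=q\circ i$, as $q_*$ has cohomological amplitude $[0,N]$ and $i_*$ is exact), so $f_*$ preserves $D^b_{\Qch}$ while twisting by the line bundle $\Ol_X(n)$ is exact.

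For the two remaining implications I would establish: for any $\Escr\in D_{\Qch}(X)$ there is a finite Postnikov tower of $i_*\Escr\in D_{\Qch}(P)$ whose graded pieces are, up to shift,
$$\Omega^p_{P/Y}(p)\otimes q^*\bigl(f_*(\Escr(-p))\bigr),\qquad p=0,1,\dots,N,$$
where $\Omega^p_{P/Y}$ is the locally free sheaf of relative differential $p$-forms. To obtain this, start from $i_*\Escr\cong q_{2*}\bigl(q_1^*(i_*\Escr)\otimes\Ol_{\Delta_P}\bigr)$, which follows from the projection formula for the diagonal $\Delta_P\colon P\hookrightarrow P\times_Y P$ and the identities $q_1\circ\Delta_P=q_2\circ\Delta_P=\id_P$, where $q_1,q_2$ are the two projections of $P\times_Y P$. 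Now replace $\Ol_{\Delta_P}$ by the relative Beilinson resolution --- the Koszul complex of the regular section of $q_1^*\Ol_P(1)\otimes q_2^*\bigl(T_{P/Y}(-1)\bigr)$ cutting out $\Delta_P$, whose $p$-th term is $q_1^*\Ol_P(-p)\otimes q_2^*\Omega^p_{P/Y}(p)$ --- and pass to its stupid filtration. Applying $q_{2*}$ then yields a Postnikov tower of $i_*\Escr$, and each graded piece is computed by moving the line bundle $\Ol_P(-p)$ inside $q_1^*$, using the projection formula for $q_2$ to split off the factor $q_2^*\Omega^p_{P/Y}(p)$, and invoking flat base change along the Cartesian square expressing $P\times_Y P$ as the fiber product of $q$ with itself (this is where flatness of $q$ is used) to rewrite $q_{2*}q_1^*\bigl(i_*\Escr\otimes\Ol_P(-p)\bigr)$ as $q^*q_*\bigl(i_*\Escr\otimes\Ol_P(-p)\bigr)=q^*\bigl(f_*(\Escr(-p))\bigr)$, the last step by $f=q\circ i$ and the projection formula for $i$.

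Granting this claim, the remaining two implications follow formally. For (1) ``$\Longleftarrow$'': if $f_*(\Escr(n))\in D^b_{\Qch}(Y)$ for every $n$ --- in fact $n=0,-1,\dots,-N$ already suffices --- then, since $q^*$ is exact and $\Omega^p_{P/Y}(p)$ is locally free, each graded piece above lies in $D^b_{\Qch}(P)$, in a uniformly bounded range of degrees; a finite Postnikov tower with bounded graded pieces has bounded total object, so $i_*\Escr\in D^b_{\Qch}(P)$, and hence $\Escr\in D^b_{\Qch}(X)$ because $i$ is a closed immersion. For (2) ``$\Longleftarrow$'', run the same argument on $\Cscr:=\operatorname{Cone}(g)$: twisting the triangle $\Escr\to\Fscr\to\Cscr$ by $\Ol_X(n)$ and pushing forward along $f$ gives $f_*(\Cscr(n))=0$ for every $n$, so every graded piece of the Postnikov tower of $i_*\Cscr$ vanishes (here using that $q$ is faithfully flat, so $q^*$ is conservative), whence $i_*\Cscr=0$, hence $\Cscr=0$, i.e.\ $g$ is an isomorphism.

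I expect the main obstacle to be the construction and the bookkeeping of the relative Beilinson resolution: one must build the Koszul resolution of $\Ol_{\Delta_P}$ from the relative Euler sequence and then carry it carefully through $q_{2*}$ using the projection formula and flat base change, keeping straight the twists and the shift in the resulting Postnikov tower, and checking that the derived tensor product against a finite complex of locally free sheaves is computed naively. A secondary, more foundational point is the very first step --- producing the closed immersion $i\colon X\hookrightarrow\mathbb{P}(\Vscr)$ with $\Vscr$ \emph{locally free} rather than merely quasi-coherent; this is automatic when $Y$ is quasi-projective (the setting of the cited references), and in general one may instead work Zariski-locally on $Y$, at the cost of having to glue Postnikov towers.
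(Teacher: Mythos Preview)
The paper does not actually prove this lemma; it is stated with a bare citation to \cite[Proposition 2.5]{spaniards2} and \cite[Lemma 1.4]{spaniards1} and no proof environment follows. So there is no in-paper argument to compare against.

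Your approach via the relative Beilinson resolution is correct and is in fact the standard one (the label \texttt{beilinson} in the paper is a strong hint that this is exactly what the cited references do). The chain of reductions --- embed $X$ into $P=\mathbb{P}(\Vscr)$ over $Y$, resolve $\Ol_{\Delta_P}$ by the Koszul complex coming from the relative Euler sequence, pass to the stupid filtration, and push forward using projection formula plus flat base change for $q$ --- produces precisely the finite Postnikov tower with graded pieces $\Omega^p_{P/Y}(p)\otimes q^*f_*(\Escr(-p))$ that you describe, and from there both nontrivial implications are immediate as you say. The observation that only the $N+1$ twists $n=0,-1,\dots,-N$ are actually needed is a nice bonus.

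The one genuine caveat you flag is the right one: a projective morphism gives a closed immersion into $\mathbb{P}(\Escr)$ for $\Escr$ quasi-coherent of finite type, not a priori locally free. Your proposed fix --- work Zariski-locally on $Y$ --- is adequate here, since both conclusions (boundedness of cohomology, vanishing of a cone) are local on the target and $Y$ is Noetherian, so no gluing of the Postnikov towers themselves is required. Alternatively, in the paper's ambient hypotheses $Y$ is quasi-compact and separated over a Noetherian base, and in the applications (Lemma~\ref{pull out the tensor}, Corollary~\ref{X-perfect}) one may reduce to $Y$ affine anyway.
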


%\begin{definition}
%Let $f:X\to Y$ be a morphism of schemes. An object $\Pscr\in D(X)$ is of finite homological dimension over $Y$ if $\Pscr\otimes f^*\Gscr$ is in $D^b(X)$ for any $\Gscr\in D^b(Y)$. 
%\end{definition}

\begin{lemma}\cite[Proposition 2.8]{spaniards2}\label{pull out the tensor}
Let $g:X\to Y$ be a projective morphism. Let $\Pscr$ be an object in $D^b_{\coh}(X)$, perfect with respect to $Y$. Let $\Fscr\in D_{\Qch}(Y)$. Then we have an isomorphism
$$\RHomscr_X(\Pscr,g^! \Ol_Y)\otimes_X g^*\Fscr \cong \RHomscr_X(\Pscr,g^!\Fscr).$$
\end{lemma}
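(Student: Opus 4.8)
The plan is to prove Lemma \ref{pull out the tensor} by reducing it to the already-established affine-base statement via the Be\u\i linson-type criterion of Lemma \ref{beilinson}. First I would observe that we have a natural morphism
$$\RHomscr_X(\Pscr,g^!\Ol_Y)\otimes_X g^*\Fscr \to \RHomscr_X(\Pscr,g^!\Ol_Y\otimes g^*\Fscr) \xrightarrow{\ \sigma\ } \RHomscr_X(\Pscr,g^!\Fscr),$$
where the first arrow is the obvious one (coming from the fact that a map out of a tensor with a fixed complex may be absorbed) and the second is the base-change map $\sigma(g,\Ol_Y,\Fscr)$ of Lemma \ref{ref-4.4-2}; since $\Ol_Y$ is trivially perfect, $\sigma$ is an isomorphism, so the content is to show the first arrow is an isomorphism. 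To check this I would invoke Lemma \ref{beilinson}(2): it suffices to show that the morphism stays an isomorphism after applying $g_*((-)(n))$ for every integer $n$. Twisting by $\Ol(n)$ commutes with the construction (it is tensoring by a line bundle, hence by a perfect complex), so what must be shown is that $g_*$ carries the morphism to an isomorphism for each $n$.

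Next I would reduce to the affine base. Because $g_*$ and the formation of the morphism are local on $Y$, and the statement to be proved is that a morphism in $D_{\Qch}(Y)$ is an isomorphism, I can cover $Y$ by affine opens $V = \spec A$; over each $g^{-1}(V)\to V$ the morphism $g$ is still projective, $\Pscr$ restricted is still $V$-perfect (relative perfection is local, and is preserved by the restriction $j^*\RHomscr = \RHomscr j^*$, $j^*g^! = g'^! j^*$ for opens — using Lemma \ref{tim}(3) and the open-immersion behaviour of the twisted inverse image), and $\Fscr$ restricts to an object of $D_{\Qch}(V)$. So I am reduced to the case $Y = \spec A$ affine.

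In the affine case the key point is that every $\Fscr \in D_{\Qch}(Y)$ is a (filtered homotopy) colimit of perfect complexes — indeed of bounded complexes of finite free modules — so it is enough to prove the isomorphism when $\Fscr$ itself is perfect, and for perfect $\Fscr$ the morphism is an isomorphism by Lemma \ref{ref-4.1-1} (or directly: both sides commute with the tensor, since $\Fscr$ is perfect, via $\sigma$). To push the isomorphism through the colimit I would note that the whole construction — $g^*$, $\otimes$, $g^!$ restricted to $D_{\Qch}$, and $\RHomscr_X(\Pscr,-)$ with $\Pscr$ a \emph{perfect-relative, hence bounded coherent} complex — commutes with filtered homotopy colimits: $g^*$ and $\otimes$ obviously do; $g^!$ on $D_{\Qch}$ commutes with small coproducts by Neeman's work \cite{Neeman} hence with homotopy colimits; and $\RHomscr_X(\Pscr,-)$ commutes with homotopy colimits because $\Pscr$, being $Y$-perfect and $g$ projective, pushes forward to something perfect (Proposition \ref{diagram}) so that, locally on $X$, $\Pscr$ is a perfect complex and $\RHomscr_X(\Pscr,-) = \Pscr^\vee \otimes (-)$. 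Combining, the morphism is a homotopy colimit of isomorphisms, hence an isomorphism.

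The main obstacle I expect is the compatibility with colimits of the two ``non-formal'' functors, namely $g^!$ and $\RHomscr_X(\Pscr,-)$: for $g^!$ one must genuinely use that $g$ is (essentially of) finite type and separated so that $g^!$ on $D_{\Qch}$ preserves coproducts (this is exactly why the projective/finite-type hypothesis matters and why we stay in $D_{\Qch}$ throughout), and for $\RHomscr_X(\Pscr,-)$ one must use the relative-perfection of $\Pscr$ to know it is \emph{locally} perfect on $X$ — note it need not be globally perfect, which is the whole point of the lemma, but local perfection already suffices since the statement that a morphism is an isomorphism is local. A secondary, more bookkeeping-level obstacle is checking that all the identifications (the definition of the natural morphism, its compatibility with $g_*((-)(n))$, with restriction to opens, and with the colimit) are the \emph{same} morphism — i.e. that one is not accidentally comparing two different maps — but this is routine given Lemmas \ref{Spaltenstein}, \ref{tim} and \ref{ref-4.4-2}.
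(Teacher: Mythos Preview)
Your setup through Lemma~\ref{beilinson} is fine and matches the paper's opening moves, but the colimit argument at the end has a genuine gap. The claim that ``locally on $X$, $\Pscr$ is a perfect complex'' is false: $g$-perfection means $\Pscr$ is locally a bounded complex of $g^{-1}\Ol_Y$-flat modules, \emph{not} of locally free $\Ol_X$-modules. For example, if $Y=\spec k$ and $X$ is any singular projective $k$-scheme, every object of $D^b_{\coh}(X)$ is $g$-perfect, but a skyscraper at a singular point is not perfect on $X$, and $\RHomscr_X$ out of it does not commute with coproducts. Likewise, $g^!=g^\times$ preserves coproducts only when $g_*$ preserves perfect complexes, i.e.\ when $g$ is itself of finite Tor-dimension, which is not assumed here. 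So neither of the two ``non-formal'' functors can be shown to preserve colimits individually on the hypotheses available.

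What does work --- and what the paper does --- is to bypass the colimit argument entirely: after twisting by $\Ol_X(n)$ and rewriting as $\RHomscr_X(\Pscr(-n),-)$, apply Grothendieck duality $g_*\RHomscr_X(\Pscr(-n),g^!(-))\cong\RHomscr_Y(g_*\Pscr(-n),-)$ and the projection formula to reduce the question to whether
\[
\RHomscr_Y(g_*\Pscr(-n),\Ol_Y)\otimes\Fscr\longrightarrow\RHomscr_Y(g_*\Pscr(-n),\Fscr)
\]
is an isomorphism. Now the point is that $g_*\Pscr(-n)$ is \emph{perfect on $Y$} by Proposition~\ref{diagram}, so this last map is an isomorphism for every $\Fscr$. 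In effect, the relative perfection of $\Pscr$ is converted into honest perfection \emph{after pushing down}, which is exactly where it is needed; your attempt tries to use it upstairs on $X$, where it does not give what you want.
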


\begin{proof}
In the given reference, the concept of perfection with respect to a morphism is replaced by the concept of ``finite homological dimension'' (and the authors ask for $\Fscr\in D^b_{\coh}(Y)$). For the convenience of the reader we repeat their proof in our setting. 

We have natural morphisms
$$\RHomscr_X(\Pscr,g^! \Ol_Y)\otimes_X g^*\Fscr \to \RHomscr_X(\Pscr,g^! \Ol_Y\otimes_X g^*\Fscr )\to \RHomscr_X(\Pscr,g^!\Fscr),$$
where the first map is as in \cite[(5.5.2)]{reduction}, and the second map is defined as in \cite[Definition 7.7]{Neeman}. We have to prove that the composition is an isomorphism.  The question being local on $Y$, we can assume $g$ to be projective. By \cite[Lemma 2.5]{spaniards2}, it is enough to show that the induced morphism
$$g_*(\RHomscr_X(\Pscr,g^! \Ol_Y)\otimes_X g^*\Fscr\otimes \Ol_X(n)) \to g_*(\RHomscr_X(\Pscr,g^!\Fscr)\otimes \Ol_X(n))$$
is an isomorphism for all $n$. Note that 
\begin{align*}
\RHomscr_X(\Pscr,g^! \Ol_Y)\otimes_X g^*\Fscr\otimes \Ol_X(n) &= \RHomscr_X(\Pscr\otimes\Ol_X(-n),g^!\Ol_Y)\otimes g^*\Fscr\\
\RHomscr_X(\Pscr,g^!\Fscr)\otimes \Ol_X(n) &= \RHomscr_X(\Pscr\otimes \Ol_X(-n),g^!\Fscr)
\end{align*}
(see for example \cite[Lemma 1.4.6]{reflexivity}) because $\Ol_X(n)$ is perfect.

By \cite[Tag 0A9I]{stacks-project}, $g^!\Ol_Y\in D^+_{\Qch}(X)$, and hence by \cite[\href{http://stacks.math.columbia.edu/tag/0123}{Tag 0A6H}]{stacks-project} we have that $\RHomscr_X(\Pscr\otimes\Ol_X(-n),g^!\Ol_Y)\in D_{\Qch}(X)$. Hence by projection formula and Grothendieck duality (\cite[Lemma 3.2]{Neeman}) we have to show that 
$$\RHomscr_Y(g_*\Pscr(-n),\Ol_Y)\otimes_X \Fscr \to \RHomscr_X(g_*\Pscr(-n),\Fscr)$$
is an isomorphism for all $n$. Since by Lemma \ref{tensor with perfect} $\Pscr(-n)$ is perfect with respect to $g$, hence  $g_*\Pscr(-n)$ is perfect by \cite[Proposition 4.8]{Illusie}, this follows by \cite[Lemma 1.4.6]{reflexivity}.
\end{proof}

%The given reference asks for $\Gscr\in D^b_{\coh}(Y)$ but the extension is straightforward using Neeman's \cite[Definition 7.7]{Neeman} definition of the map $g^*\Escr\otimes g^!\Fscr\to f^!(\Escr\otimes \Fscr)$ for $\Escr,\Fscr \in D\Qch(Y)$ and the corresponding Grothendieck duality statement.
%
%We will also use the following analogue of \ref{reflexive}: don't need this! See earlier. Keeping it here just to make sure.
%
%\begin{proposition}\cite[Proposition 2.11]{spaniards2}\label{reflexive for projective}
%Let $f:X\to Y$ be a locally projective morphism and $\Escr$ an object of $D^b_{\coh}(X)$. If $\Escr$ has finite homological dimension over $Y$ then the natural morphism
%$$\Escr \to \RHomscr_Z(\RHomscr_Z(\Escr,f^!\Ol_Y),f^!\Ol_Y)$$
%is an isomorphism.
%\end{proposition}

%%%%%%%%%%%%%%%%%%%%%%%%%
\section{The left adjoint}%%%%%%%%%%%%%%%%%%%%%%%%%
%%%%%%%%%%%%%%%%%%%%%%%%%

\begin{theorem}\label{left}
Let either $Y$ be a quasi-projective scheme over $S$, or $Y$ be such that there exists a compactification $Y\xrightarrow{j} \bar{Y}$ with $X\times_S \bar{Y}$ of finite Tor-dimension over $X$. Let $\Pscr$ be a $\pi_X$-perfect complex in $D^b_{\coh}(X\times_S Y)$. Then 
$$\Phi^{X\to Y}_{\Pscr} : D_{\Qch}(X)\to D_{\Qch}(Y)$$ 
has a left adjoint 
$$\Phi^{Y\to X}_{\Pscr_L} : D_{\Qch}(Y) \to D_{\Qch}(X),$$ 
where 
$$\Pscr_L=\RHomscr_{X\times_S Y}(\Pscr,\pi_X^!\Ol_X).$$
\end{theorem}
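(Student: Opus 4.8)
The plan is to establish the adjunction directly, by producing for all $A\in D_{\Qch}(X)$ and $B\in D_{\Qch}(Y)$ a natural isomorphism
$$\Hom_{D_{\Qch}(Y)}(B,\Phi^{X\to Y}_{\Pscr}(A))\;\cong\;\Hom_{D_{\Qch}(X)}(\Phi^{Y\to X}_{\Pscr_L}(B),A),$$
which, being natural in both variables, simultaneously proves that the left adjoint exists and that it is the Fourier--Mukai transform with kernel $\Pscr_L$. I would obtain it as a chain of natural isomorphisms, each moving one of the four elementary functors making up $\Phi^{X\to Y}_{\Pscr}=\pi_{Y*}\bigl(\pi_X^*(-)\otimes\Pscr\bigr)$ to the other side of the $\Hom$: starting from $\Hom_Y(B,\pi_{Y*}(\pi_X^*A\otimes\Pscr))$, one (a) uses $\pi_Y^*\dashv\pi_{Y*}$ to get $\Hom_{X\times_SY}(\pi_Y^*B,\pi_X^*A\otimes\Pscr)$; (b) rewrites $\pi_X^*A\otimes\Pscr$ as $\RHomscr_{X\times_SY}(\Pscr_L,\pi_X^!A)$; (c) uses the tensor--Hom adjunction of Lemma~\ref{Spaltenstein} to get $\Hom_{X\times_SY}(\pi_Y^*B\otimes\Pscr_L,\pi_X^!A)$; and (d) uses Grothendieck duality for $\pi_X$ to reach $\Hom_X(\pi_{X*}(\pi_Y^*B\otimes\Pscr_L),A)=\Hom_X(\Phi^{Y\to X}_{\Pscr_L}(B),A)$.

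Step (b) is where $\pi_X$-perfectness enters. Since $\pi_X\colon X\times_SY\to X$ is separated and essentially of finite type between quasi-compact quasi-separated schemes it is compactifiable, so by Lemma~\ref{reflexive} the $\pi_X$-perfect complex $\Pscr$ is derived $\pi_X^!\Ol_X$-reflexive; hence $\Pscr\cong\RHomscr_{X\times_SY}(\Pscr_L,\pi_X^!\Ol_X)$ with $\Pscr_L=\RHomscr_{X\times_SY}(\Pscr,\pi_X^!\Ol_X)$ itself $\pi_X$-perfect by Proposition~\ref{perfect is mantained}. One then pulls $\pi_X^*A$ inside the $\RHomscr$: in the finite Tor-dimension case this is Lemma~\ref{ref-4.1-1} applied to $\Pscr_L$, combined with Lemma~\ref{ref-4.4-2} to identify $\pi_X^!\Ol_X\otimes\pi_X^*A$ with $\pi_X^!A$; in the projective case it is Lemma~\ref{pull out the tensor} applied to $\Pscr_L$. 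This gives $\pi_X^*A\otimes\Pscr\cong\RHomscr_{X\times_SY}(\Pscr_L,\pi_X^!A)$ as needed, and steps (a) and (c) are formal.

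The main obstacle is step (d): $\pi_X$ is not proper unless $Y\to S$ is, so Grothendieck duality -- i.e. the fact that $\pi_X^!$ is right adjoint to $\pi_{X*}$ -- does not hold in this generality, and likewise Lemmas~\ref{pull out the tensor} and \ref{ref-4.1-1} become available only once one has arranged a proper, respectively projective or finite-Tor-dimension, morphism. This is exactly what the hypotheses on $Y$ are for. Fix a compactification $j\colon Y\hookrightarrow\bar Y$ with $\bar Y$ proper over $S$ -- projective over $S$ in the first case, and with $X\times_S\bar Y\to X$ of finite Tor-dimension in the second -- put $\bar Z=X\times_S\bar Y$ and let $\bar\jmath=\id_X\times j\colon X\times_SY\hookrightarrow\bar Z$; then the projection $\bar\pi_X\colon\bar Z\to X$ is proper and, according to the case, projective or of finite Tor-dimension, so all the tools above apply on $\bar Z$. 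The argument then runs the chain (a)--(d) over $\bar Z$: one uses that $\bar\jmath$ is an open immersion, so $\pi_X^!=\bar\jmath^*\bar\pi_X^!$, together with the three identities of Lemma~\ref{tim}, flat base change, and the projection formula, to transfer the statement between $X\times_S Y$ and $\bar Z$, working with a coherent $\bar\pi_X$-perfect complex $\bar\Pscr$ on $\bar Z$ restricting to $\Pscr$ and with $\Phi^{X\to Y}_{\Pscr}$ expressed through $\Phi^{X\to\bar Y}_{\bar\Pscr}$. The genuinely delicate point is the construction and handling of this $\bar\pi_X$-perfect extension of the kernel together with the boundary terms produced by $\bar\jmath$; in the projective case this is controlled by Lemma~\ref{beilinson}, which reduces the isomorphisms to be checked on $\bar Z$ to statements about the complexes $\bar\pi_{X*}(\bar\Pscr\otimes\Ol_{\bar Z}(n))$ on $X$, and those are honest perfect complexes on $X$ by Proposition~\ref{diagram}(1). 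Once (d) is secured this way, assembling the chain yields the asserted adjunction and formula.
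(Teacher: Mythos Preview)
Your outline matches the paper's proof almost exactly: the same chain (a)--(d), the same use of reflexivity and of Lemmas~\ref{ref-4.1-1}/\ref{ref-4.4-2} (finite Tor-dimension case) or Lemma~\ref{pull out the tensor} (projective case), and the same passage to a compactification $\bar Z=X\times_S\bar Y$ to make $\bar\pi_X$ proper so that duality and these lemmas apply.

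The one place where you diverge from the paper is the point you flag as ``genuinely delicate'': producing a coherent $\bar\pi_X$-perfect extension $\bar\Pscr$ of $\Pscr$ and controlling boundary terms via Lemma~\ref{beilinson}. In the paper this step is not delicate at all. One simply takes $\bar\Pscr=(\id\times j)_*\Pscr$; this lies in $D^b_{\Qch}(\bar Z)$ (not $D^b_{\coh}$, and coherence is never needed), and it is $\bar\pi_X$-perfect directly by Proposition~\ref{diagram}(1), since an open immersion of Noetherian schemes is quasi-compact and quasi-separated. Reflexivity (Lemma~\ref{reflexive}) and Proposition~\ref{perfect is mantained} are then applied to $(\id\times j)_*\Pscr$ on $\bar Z$. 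All transport between $X\times_SY$ and $\bar Z$ is handled by the projection formula together with the three identities for open immersions in Lemma~\ref{tim}; Lemma~\ref{beilinson} plays no role in the argument (it appears only inside the proof of Lemma~\ref{pull out the tensor}). So your plan is correct, but you should drop the demand that $\bar\Pscr$ be coherent and recognise that $(\id\times j)_*\Pscr$ already does the job.
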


\begin{remark}
The finite Tor-dimension condition is satisfied for example when $X$ and $Y$ are defined over a field, i.e. $S=\text{Spec}(k)$.
\end{remark}

\begin{proof} 
In the quasi-projective setting, let $Y\to S$ factor as $Y\xrightarrow{j}\bar{Y} \to S$ with $\bar{Y}$ projective over $S$ and $j$ an open immersion. In the finite Tor dimension setting, let $j$ and $\bar{Y}$ as in the statement of the theorem. Hence in either case we have a diagram
$$\xymatrix{
& X\times_S \bar{Y} \ar@/^1.6pc/[ddrr]^{\bar{\pi}_Y} \ar@/_1.3pc/[ddl]_{\bar{\pi}_X}\\
& X\times_S Y \ar[u]_{\id\times j} \ar@/^0.8pc/[dr]^{\pi_Y} \ar@/_0.7pc/[dl]_{\pi_X}\\
X  && Y \ar[r]_-{j} & \bar{Y}
}$$
where $j$ is an open immersion and $\bar{\pi}_X$ is either proper and finite Tor-dimension or a projective morphism (since proper and  projective morphisms are stable under base change). 

We have functorial isomorphisms
\begin{align}
\Hom_X&(\Phi^{Y\to X}_{\Pscr_L} (\Fscr), \Gscr)  \nonumber\\
=& \Hom_X(\pi_{X_*}(\Pscr_L\otimes \pi_Y^*\Fscr), \Gscr)  \nonumber\\
=& \Hom_X(\bar{\pi}_{X_*} (\id \times j)_* (\Pscr_L\otimes \pi_Y^*\Fscr), \Gscr) \tag{L2}\label{L2}  \\
=& \Hom_{X\times_S \bar{Y}}((\id \times j)_*(\Pscr_L\otimes \pi_Y^*\Fscr) , \bar{\pi}_X^! \Gscr) \tag{L3}\label{L3} \\
=& \Hom_{X\times_S \bar{Y}}((\id \times j)_*(\RHomscr_{X\times_S Y}(\Pscr,\pi_X^!\Ol_X)\otimes \pi_Y^*\Fscr) , \bar{\pi}_X^! \Gscr) \tag{L4}\label{L4} \\
=& \Hom_{X\times_S \bar{Y}}((\id \times j)_*(\RHomscr_{X\times_S Y}(\Pscr,(\id\times j)^*\bar{\pi}_X^!\Ol_X)\otimes \pi_Y^*\Fscr) , \bar{\pi}_X^! \Gscr) \tag{L5}\label{L5} \\
=& \Hom_{X\times_S \bar{Y}}((\id \times j)_*(\RHomscr_{X\times_S Y}((\id\times j)^* (\id\times j)_*\Pscr,(\id\times j)^*\bar{\pi}_X^!\Ol_X)\otimes \pi_Y^*\Fscr) , \bar{\pi}_X^! \Gscr) \tag{L6}\label{L6} \\
=& \Hom_{X\times_S \bar{Y}}((\id \times j)_*((\id\times j)^* \RHomscr_{X\times_S \bar{Y}}((\id\times j)_*\Pscr,\bar{\pi}_X^!\Ol_X)\otimes \pi_Y^*\Fscr) , \bar{\pi}_X^! \Gscr) \tag{L7}\label{L7} \\
=& \Hom_{X\times_S \bar{Y}}( \RHomscr_{X\times_S \bar{Y}}((\id\times j)_*\Pscr,\bar{\pi}_X^!\Ol_X)\otimes (\id \times j)_* \pi_Y^*\Fscr, \bar{\pi}_X^! \Gscr) \tag{L8}\label{L8} \\
=& \Hom_{X\times_S \bar{Y}}( (\id \times j)_* \pi_Y^*\Fscr , \RHomscr_{X\times_S \bar{Y}} (\RHomscr_{X\times_S \bar{Y}}((\id\times j)_*\Pscr,\bar{\pi}_X^!\Ol_X),\bar{\pi}_X^! \Gscr)) \tag{L9}\label{L9} 
\end{align}
where \eqref{L3} holds because $\bar{\pi}_{X*}$ is left adjoint to $\bar{\pi}_X^!$ since $\bar{\pi}_X$ is proper. The equality \eqref{L4} follows by the definition of $\Pscr_L$. The equality \eqref{L5} holds because $\pi_X^! = (\bar{\pi}_X\circ (\id\times j))^!= (\id\times j)^! \bar{\pi}_X^!$ since $\id\times j$ is finite Tor dimension. Equality \ref{L6} holds because $(\id\times j)^* (\id\times j)_*=\id$ by Lemma \ref{tim}. The equality \eqref{L7} holds by Lemma \ref{tim}. Then, \eqref{L8} is by projection formula and \eqref{L9} follows by Lemma \ref{Spaltenstein}. 

Now note that by Proposition \ref{diagram}(1), $(\id\times j)_* \Pscr$ is $\bar{\pi}_X$-perfect (an open immersion of Noetherian schemes is quasi-compact); hence by Proposition \ref{perfect is mantained}, $\RHomscr_{X\times_S \bar{Y}}((\id\times j)_*\Pscr,\bar{\pi}_X^!\Ol_X)$ is $\bar{\pi}_X$-perfect. Hence we can continue our chain of natural isomorphisms:

\begin{align}
=& \Hom_{X\times_S \bar{Y}}( (\id \times j)_* \pi_Y^*\Fscr , \RHomscr_{X\times_S \bar{Y}} (\RHomscr_{X\times_S \bar{Y}}((\id\times j)_*\Pscr,\bar{\pi}_X^!\Ol_X),\bar{\pi}_X^! \Ol_X)\otimes \bar{\pi}_X^* \Gscr) \tag{L10}\label{L10}\\ 
=& \Hom_{X\times_S \bar{Y}}( (\id \times j)_* \pi_Y^*\Fscr ,  (\id\times j)_*\Pscr\otimes \bar{\pi}_X^! \Gscr) \tag{L11}\label{L11}\\ 
=& \Hom_{X\times_S \bar{Y}}( (\id \times j)_* \pi_Y^*\Fscr , (\id\times j)_*(\Pscr\otimes (\id\times j)^*\bar{\pi}_X^* \Gscr)) \tag{L12}\label{L12}\\ 
=& \Hom_{X\times_S Y}( \pi_Y^*\Fscr , \Pscr\otimes (\id\times j)^*\bar{\pi}_X^* \Gscr) \tag{L13}\label{L13}\\ 
=& \Hom_{Y}( \Fscr,\pi_{Y*}( \Pscr \otimes \pi_X^*\Gscr)) \tag{L14}\label{L14}\\
=& \Hom_{Y}( \Fscr,  \Phi^{X\to Y}_{\Pscr} (\Gscr)) \tag{L15}\label{L15}.
\end{align}

The equality \eqref{L10} follows by Proposition \ref{pull out the tensor} in the projective case; in the finite Tor-dimension case, it follows because
\begin{align*}
\RHomscr_{X\times_S \bar{Y}} &(\RHomscr_{X\times_S \bar{Y}}((\id\times j)_*\Pscr,\bar{\pi}_X^!\Ol_X),\bar{\pi}_X^! \Gscr)\\
&=\RHomscr_{X\times_S \bar{Y}} (\RHomscr_{X\times_S \bar{Y}}((\id\times j)_*\Pscr,\bar{\pi}_X^!\Ol_X),\bar{\pi}_X^!\Ol_X\otimes \bar{\pi}_X^* \Gscr)\\
&= \RHomscr_{X\times_S \bar{Y}} (\RHomscr_{X\times_S \bar{Y}}((\id\times j)_*\Pscr,\bar{\pi}_X^!\Ol_X),\bar{\pi}_X^! \Ol_X)\otimes \bar{\pi}_X^* \Gscr
\end{align*}
by Lemma \ref{ref-4.4-2} and Lemma \ref{ref-4.1-1}. 

The equality \eqref{L11} is by Proposition \ref{reflexive}, and \eqref{L12} is by projection formula. Further, \eqref{L13} follows by Lemma \ref{tim}. We conclude with \eqref{L14} by adjunction of $\pi_Y^*$ and $\pi_{Y*}$.
\end{proof}

%%%%%%%%%%%%%%%%%%%%%%%%%
\section{The right adjoint}%%%%%%%%%%%%
%%%%%%%%%%%%%%%%%%%%%%%%%

\begin{lemma}\label{right}
Let either $X$ be a quasi-projective scheme over $S$, or $X$ be such that there exists a compactification $X\xrightarrow{j} \bar{X}$ with $\bar{X}\times_S Y$ of finite Tor-dimension over $Y$. Let $\Pscr$ be a $\pi_Y$-perfect complex in $D^b_{\coh}(X\times_S Y)$. Then the functor
$$\Phi^{X\to Y}_{\Pscr} :D_{\Qch}(X)\to D_{\Qch}(Y)$$
has a right adjoint 
$$\Phi^{Y\to X}_{\Pscr_R} : D_{\Qch}(Y)\to D_{\Qch}(X),$$
where 
$$\Pscr_R=\RHomscr_{X\times_S Y}(\Pscr,\pi_Y^!\Ol_Y).$$
\end{lemma}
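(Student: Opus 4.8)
\emph{Proof proposal.}

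\smallskip

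The plan is not to repeat the argument of Theorem \ref{left} but to deduce Lemma \ref{right} from it, by applying Theorem \ref{left} with the roles of $X$ and $Y$ interchanged to the Fourier--Mukai functor whose kernel is $\Pscr_R$. Since ``$G$ is a right adjoint of $F$'' is the same assertion as ``$F$ is a left adjoint of $G$'', it is enough to exhibit $\Phi^{X\to Y}_{\Pscr}$ as a left adjoint of $\Phi^{Y\to X}_{\Pscr_R}$. Now Theorem \ref{left}, read with $X$ and $Y$ swapped (and with the identification $X\times_S Y=Y\times_S X$), produces a left adjoint of a Fourier--Mukai functor $\Phi^{Y\to X}_{\Qscr}:D_{\Qch}(Y)\to D_{\Qch}(X)$ as soon as (i) $X$ is quasi-projective over $S$, or admits a compactification $X\to\bar X$ with $\bar X\times_S Y$ of finite Tor-dimension over $Y$ — this is exactly the hypothesis of Lemma \ref{right} — and (ii) $\Qscr$ is a $\pi_Y$-perfect complex in $D^b_{\coh}(X\times_S Y)$; and the left adjoint is then $\Phi^{X\to Y}_{\Qscr_L}$ with $\Qscr_L=\RHomscr_{X\times_S Y}(\Qscr,\pi_Y^!\Ol_Y)$. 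So the whole point is to take $\Qscr=\Pscr_R$ and verify (ii), then observe that $\Qscr_L=(\Pscr_R)_L\cong\Pscr$.

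First I would check that $\Pscr_R=\RHomscr_{X\times_S Y}(\Pscr,\pi_Y^!\Ol_Y)$ is an admissible kernel. In both cases of the hypothesis there is a compactification $X\xrightarrow{j}\bar X$ with $\bar X$ proper over $S$ (in the quasi-projective case one takes the scheme-theoretic closure in a projective-over-$S$ ambient space, exactly as in the proof of Theorem \ref{left}); hence $\bar\pi_Y:\bar X\times_S Y\to Y$ is proper and $\pi_Y=\bar\pi_Y\circ(\id\times j)$ is an open immersion followed by a proper morphism, so $\pi_Y$ is compactifiable. Since $\Pscr$ is $\pi_Y$-perfect, Lemma \ref{reflexive} applies: $\Pscr$ is derived $\pi_Y^!\Ol_Y$-reflexive, so $\Pscr_R\in D^b_{\coh}(X\times_S Y)$ and the canonical map $\Pscr\to\RHomscr_{X\times_S Y}(\Pscr_R,\pi_Y^!\Ol_Y)$ is an isomorphism. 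Moreover $\Ol_Y$ is a perfect complex on $Y$, so Proposition \ref{perfect is mantained} (with $f=\pi_Y$) shows that $\Pscr_R$, being $\RHomscr(\Pscr,\pi_Y^!\Ol_Y)$ applied to the $\pi_Y$-perfect complex $\Pscr$, is again $\pi_Y$-perfect. This is (ii).

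Now Theorem \ref{left} (with $X$ and $Y$ interchanged, kernel $\Pscr_R$) gives that $\Phi^{Y\to X}_{\Pscr_R}:D_{\Qch}(Y)\to D_{\Qch}(X)$ has a left adjoint $\Phi^{X\to Y}_{(\Pscr_R)_L}$ with $(\Pscr_R)_L=\RHomscr_{X\times_S Y}(\Pscr_R,\pi_Y^!\Ol_Y)$, and by the reflexivity isomorphism just recorded $(\Pscr_R)_L\cong\Pscr$, whence $\Phi^{X\to Y}_{(\Pscr_R)_L}\cong\Phi^{X\to Y}_{\Pscr}$. Thus $\Phi^{X\to Y}_{\Pscr}$ is a left adjoint of $\Phi^{Y\to X}_{\Pscr_R}$, i.e. $\Phi^{Y\to X}_{\Pscr_R}$ is a right adjoint of $\Phi^{X\to Y}_{\Pscr}$, which is the claim. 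Unravelled, this is precisely the chain of natural isomorphisms in the proof of Theorem \ref{left} run with $X\leftrightarrow Y$ and $\Pscr$ replaced by $\Pscr_R$ (using the proper map $\bar\pi_Y$, the projection formula, Lemma \ref{Spaltenstein}, the identities of Lemma \ref{tim}, Proposition \ref{reflexive}, and the ``pull the tensor out of $\RHomscr$'' step — Lemma \ref{pull out the tensor} in the quasi-projective case, Lemmas \ref{ref-4.4-2} and \ref{ref-4.1-1} in the finite Tor-dimension case), with the final identification $(\Pscr_R)_L\cong\Pscr$ being what converts the formula for the left adjoint of $\Phi^{Y\to X}_{\Pscr_R}$ back into $\Phi^{X\to Y}_{\Pscr}$.

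The only genuine verifications are the two points in the middle paragraph, and neither is a real obstacle: that $\pi_Y$ is compactifiable, so that Lemma \ref{reflexive} may be invoked (this is where the compactification of $X$ in the hypothesis, or the fact that quasi-projective schemes compactify over $S$, is used), and that $\Pscr_R$ remains coherent and $\pi_Y$-perfect (Lemma \ref{reflexive} and Proposition \ref{perfect is mantained}). All of the analytic content is already packaged inside Theorem \ref{left}; if one preferred a self-contained proof, the step carrying the real weight would again be pulling $\bar\pi_Y^*\Gscr$ out of the relevant $\RHomscr$, just as in the proof of Theorem \ref{left}.
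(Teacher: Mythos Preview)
Your proposal is correct and takes a genuinely different route from the paper. The paper proves Lemma~\ref{right} by a direct chain of functorial isomorphisms (R2)--(R13), structurally parallel to but independent of the proof of Theorem~\ref{left}: it starts from $\Hom_X(\Gscr,\Phi^{Y\to X}_{\Pscr_R}(\Fscr))$, uses the $\pi_X^*$--$\pi_{X*}$ adjunction, Lemma~\ref{tim}, the projection formula, reflexivity (Lemma~\ref{reflexive}) to pass from $(j\times\id)_*\Pscr_R$ to its double dual, the ``pull the tensor out'' step (Proposition~\ref{pull out the tensor} or Lemmas~\ref{ref-4.4-2} and~\ref{ref-4.1-1}), Lemma~\ref{Spaltenstein}, and finally reflexivity again at step (R10) to identify $\RHomscr(\Pscr_R,\pi_Y^!\Ol_Y)$ with $\Pscr$. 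You instead observe that once $\Pscr_R$ is shown to be $\pi_Y$-perfect and in $D^b_{\coh}$ (via Proposition~\ref{perfect is mantained} and Lemma~\ref{reflexive}), Theorem~\ref{left} applied with $X\leftrightarrow Y$ and kernel $\Pscr_R$ gives a left adjoint $\Phi^{X\to Y}_{(\Pscr_R)_L}$, and a single invocation of reflexivity yields $(\Pscr_R)_L\cong\Pscr$. This is more economical: it avoids rewriting the long chain and isolates the two genuinely new verifications (compactifiability of $\pi_Y$, and that $\Pscr_R$ stays $\pi_Y$-perfect and coherent). The paper's direct approach, on the other hand, is self-contained and makes the chain of isomorphisms explicit, which can be useful if one wants to trace naturality or restrict to subcategories. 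The ingredients are identical; only the packaging differs.
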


\begin{proof}
In the quasi-projective setting, let $X\to S$ factor as $X\xrightarrow{j}\bar{X} \to S$ with $\bar{X}$ projective over $S$ and $j$ an open immersion. In the finite Tor dimension setting, let $j$ and $\bar{X}$ be as in the statement of the theorem. Hence in either case we have a diagram
$$\xymatrix{
&& \bar{X}\times_S Y \ar@/_1.6pc/[ddll]_{\bar{\pi}_X} \ar@/^1.3pc/[ddr]^{\bar{\pi}_Y}\\
&& X\times_S Y \ar[u]^{j\times \id} \ar@/_0.8pc/[dl]_{\pi_X} \ar@/^0.7pc/[dr]^{\pi_Y}\\
\bar{X} & X \ar[l]_-{j} && Y
}$$
where $j$ is an open immersion and $\bar{\pi}_Y$ is either proper and finite Tor-dimension or a projective morphism (since proper and  projective morphisms are stable under base change). We have functorial isomorphisms

\begin{align*}
\Hom_{X}&(\Gscr, \Phi^{Y\to X}_{\Pscr_R} (\Fscr)) \\
&= \Hom_{X}(\Gscr, \pi_{X_*}(\Pscr_R\otimes \pi_Y^*\Fscr))  \\
&= \Hom_{X\times_S Y} (\pi_X^* \Gscr, \Pscr_R\otimes \pi_Y^*\Fscr) \tag{R2}\label{R2} \\
&= \Hom_{\bar{X}\times_S Y} ((j\times \id)_* \pi_X^* \Gscr,(j\times\id)_* ( \Pscr_R\otimes \pi_Y^*\Fscr)) \tag{R3}\label{R3} \\
&= \Hom_{\bar{X}\times_S Y} ((j\times \id)_* \pi_X^* \Gscr,(j\times\id)_* \Pscr_R\otimes \bar{\pi}_Y^*\Fscr) \tag{R4}\label{R4} \\
\end{align*}
where \eqref{R2} is by $\pi_X^*$-$\pi_{X_*}$ adjunction, \eqref{R3} is by Lemma \ref{tim}, and \eqref{R4} is by projection formula. 

Now note that, by Proposition \ref{perfect is mantained}, $\Pscr_R$ is also perfect with respect to $\pi_Y$, and hence by Proposition \ref{diagram}(1), $(j\otimes \id)_* \Pscr_R$ is $\bar{\pi}_Y$-perfect (an open immersion of Noetherian schemes is quasi-compact). Hence we can continue our chain of natural isomorphisms:

\begin{align*}
&= \Hom_{\bar{X}\times_S Y} ((j\times \id)_* \pi_X^* \Gscr,\RHomscr_{\bar{X}\times_S Y}(\RHomscr_{\bar{X}\times_S Y} ((j\times\id)_* \Pscr_R, \bar{\pi}_Y^! \Ol_Y),\bar{\pi}_Y^!\Ol_Y) \otimes \bar{\pi}_Y^*\Fscr) \tag{R5}\label{R5} \\
&= \Hom_{\bar{X}\times_S Y} ((j\times \id)_* \pi_X^* \Gscr,\RHomscr_{\bar{X}\times_S Y}(\RHomscr_{\bar{X}\times_S Y} ((j\times\id)_* \Pscr_R, \bar{\pi}_Y^! \Ol_Y),\bar{\pi}_Y^! \Fscr)) \tag{R6}\label{R6} \\
&= \Hom_{\bar{X}\times_S Y} ((j\times \id)_* \pi_X^* \Gscr \otimes \RHomscr_{\bar{X}\times_S Y} ((j\times\id)_* \Pscr_R, \bar{\pi}_Y^! \Ol_Y),\bar{\pi}_Y^! \Fscr) \tag{R7}\label{R7} \\
&= \Hom_{\bar{X}\times_S Y} ((j\times \id)_* (\pi_X^* \Gscr \otimes (j\times \id)^* \RHomscr_{\bar{X}\times_S Y} ((j\times\id)_* \Pscr_R, \bar{\pi}_Y^! \Ol_Y)),\bar{\pi}_Y^! \Fscr) \tag{R8}\label{R8} \\
&= \Hom_{\bar{X}\times_S Y} ((j\times \id)_* (\pi_X^* \Gscr \otimes  \RHomscr_{X\times_S Y} ( \Pscr_R, \pi_Y^! \Ol_Y)),\bar{\pi}_Y^! \Fscr) \tag{R9}\label{R9} \\
&= \Hom_{\bar{X}\times_S Y} ((j\times \id)_* (\pi_X^* \Gscr \otimes  \Pscr) ,\bar{\pi}_Y^! \Fscr) \tag{R10}\label{R10} \\
&= \Hom_{Y} (\bar{\pi}_{Y*}(j\times \id)_* (\pi_X^* \Gscr \otimes \Pscr), \Fscr) \tag{R11}\label{R11}\\
&= \Hom_{Y} (\pi_{Y*}(\pi_X^*\Gscr\otimes \Pscr),\Fscr) \tag{R12}\label{R12}\\
&= \Hom_{Y} (\Phi^{X\to Y}_{\Pscr} (\Gscr),\Fscr) \tag{R13}\label{R13}.
\end{align*}

The equality \eqref{R5} follows by Proposition \ref{reflexive}. By Proposition \ref{perfect is mantained}, $\RHomscr_{\bar{X}\times_S Y} ((j\times\id)_* \Pscr_R, \bar{\pi}_Y^! \Ol_Y),\bar{\pi}_Y^!\Ol_Y) $ is $\bar{\pi}_Y$-perfect, hence equality \eqref{R6} follows by Proposition \ref{pull out the tensor} in the projective case; in the finite Tor-dimension case, it follows because
\begin{align*}
\RHomscr_{\bar{X}\times_S Y} & (\RHomscr_{\bar{X}\times_S Y} ((j\times\id)_* \Pscr_R, \bar{\pi}_Y^! \Ol_Y),\bar{\pi}_Y^!\Ol_Y) \otimes \bar{\pi}_Y^*\Fscr \\
&=\RHomscr_{\bar{X}\times_S Y} (\RHomscr_{\bar{X}\times_S Y} ((j\times\id)_* \Pscr_R, \bar{\pi}_Y^! \Ol_Y),\bar{\pi}_Y^!\Ol_Y \otimes \bar{\pi}_Y^*\Fscr)\\ 
&=\RHomscr_{\bar{X}\times_S Y}(\RHomscr_{\bar{X}\times_S Y} ((j\times\id)_* \Pscr_R, \bar{\pi}_Y^! \Ol_Y),\bar{\pi}_Y^! \Fscr)
\end{align*}
by Lemma \ref{ref-4.1-1} and Lemma \ref{ref-4.4-2}. Then, \eqref{R7} follows by Lemma \ref{Spaltenstein} and \eqref{R8} is by projection formula. Equality \eqref{R9} holds by Lemma \ref{tim} and \eqref{R10} is by definition of $\Pscr_R$ and Lemma \ref{reflexive}. Equality \eqref{R11} holds by adjunction of $\bar{\pi}_{Y*}$ and $\bar{\pi}_Y^!$ since $\bar{\pi}_Y$ is proper. 
\end{proof}

%%%%%%%%%%%%%%%%%%%%%%%%%
\section{The projective case}%%%%%%%%%%%%
%%%%%%%%%%%%%%%%%%%%%%%%%

In this last section we provide two corollaries to our main result in the case where the schemes $X$ and $Y$ are projective over the base $S$. In this case, from the behavior of the Fourier-Mukai transform $\Phi^{X\to Y}_{\Pscr}$ we can deduce properties of the kernel $\Pscr$:
\begin{enumerate}
\item $\Phi^{X\to Y}_{\Pscr}$ maps $D^b_{\coh}(X)$ to $D^b_{\coh}(Y)$ if and only if $\Pscr$ is $\pi_X$-perfect (Corollary \ref{X-perfect}).
\item If $\Phi^{X\to Y}_{\Pscr}$ is an equivalence, then $\Pscr$ is $\pi_X$- and $\pi_Y$-perfect (Corollary \ref{both_perfect}).
\end{enumerate}
Then in both cases we get a left adjoint as in Theorem \ref{left}; in the second case, we get an adjoint as in Theorem \ref{right}.

\begin{lemma}\label{finite homological dimension}
Let $\Pscr \in D^b_{\Qch}(X\times_S Y)$. The following are equivalent:
\begin{enumerate}
\item $\Pscr \in D^b_{\coh}(X\times_S Y)$ and it is $\pi_X$-perfect.
\item For all $\Fscr\in D^b_{\coh}(X)$ we have $\Pscr\otimes \pi_X^*\Fscr\in D^b_{\coh}(X\times_S Y)$.
\end{enumerate}
\end{lemma}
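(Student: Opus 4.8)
The plan is to establish the two implications separately. The implication $(1)\Rightarrow(2)$ is formal: if $\Pscr$ is $\pi_X$-perfect then, since $\pi_X\colon X\times_S Y\to X$ is a morphism of Noetherian schemes and $\Fscr\in D^b_{\coh}(X)$, the pullback $\pi_X^*\Fscr$ lies in $D^-_{\coh}(X\times_S Y)$, so $\Pscr\otimes\pi_X^*\Fscr$ has coherent cohomology; and writing $\Pscr\otimes\pi_X^*\Fscr=\Pscr\otimes_{\pi_X^{-1}\Ol_X}\pi_X^{-1}\Fscr$, the fact that locally on $X\times_S Y$ the complex $\Pscr$ is represented by a bounded complex of flat $\pi_X^{-1}\Ol_X$-modules (the definition of $\pi_X$-perfect) together with the boundedness of $\pi_X^{-1}\Fscr$ shows that the tensor product is bounded. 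Hence $\Pscr\otimes\pi_X^*\Fscr\in D^b_{\coh}(X\times_S Y)$.

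For $(2)\Rightarrow(1)$ I would first take $\Fscr=\Ol_X$ to obtain $\Pscr=\Pscr\otimes\pi_X^*\Ol_X\in D^b_{\coh}(X\times_S Y)$; since $X\times_S Y$ is of finite type over the Noetherian scheme $X$, such a complex is automatically pseudo-coherent relative to $\pi_X$ (this is the relative analogue of \cite[Example 1.3]{Illusie}), so the only remaining point is that $\Pscr$ has finite Tor-amplitude over $\pi_X^{-1}\Ol_X$. For this I would use the fibrewise criterion for relative perfectness: a complex pseudo-coherent relative to $\pi_X$ is $\pi_X$-perfect if and only if, for every point $x\in X$, the derived fibre $\Pscr\otimes_{\Ol_X}k(x)$ over the scheme-theoretic fibre $(X\times_S Y)_x$ has bounded cohomology. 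To check this from (2), fix $x\in X$ and let $\Fscr_x=\Ol_{\overline{\{x\}}}$ be the structure sheaf of the reduced closed subscheme $\overline{\{x\}}\subseteq X$, which is a coherent $\Ol_X$-module; then $\Pscr\otimes\pi_X^*\Fscr_x\in D^b_{\coh}(X\times_S Y)$ by hypothesis, and $\Pscr\otimes_{\Ol_X}k(x)$ is obtained from this bounded complex by the flat operation of passing to the fibre over the generic point $x$ of the integral scheme $\overline{\{x\}}$, hence is itself bounded. Therefore $\Pscr$ is $\pi_X$-perfect.

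I expect the genuine content to be the fibrewise criterion for relative perfectness, which is not among the preliminaries of Section~2; it is classical (SGA 6, Exp.~III, and \cite{Illusie}), and I would cite it. If one prefers to stay within the projective setting of this section and use only the tools already in place, there is an alternative route: being $\pi_X$-perfect is local on $X$, so one may assume $X=\spec R$ affine and $\pi_X$ projective; applying (2) to all finitely generated $R$-modules $M$ and twisting by $\Ol(n)$ gives, via the projection formula and the fact that $\pi_{X*}$ preserves $D^b_{\coh}$, that $\pi_{X*}(\Pscr(n))\otimes_R M$ is bounded for every such $M$ and every $n$, which forces each $\pi_{X*}(\Pscr(n))$ to be a perfect complex of $R$-modules; embedding $X\times_S Y$ as a closed subscheme of some $\PP^m_R$ and invoking the Beilinson resolution of the diagonal, one then rebuilds the pushforward of $\Pscr$ to $\PP^m_R$ as a finite complex whose terms are pullbacks of these perfect $R$-complexes tensored with vector bundles on $\PP^m_R$, hence perfect, and therefore $\pi_X$-perfect since $\PP^m_R$ is flat over $R$. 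Either way the substance is exactly this passage from a fibrewise (or cohomological) boundedness statement to relative perfectness; everything else is bookkeeping with the projection formula and identities of the type in Lemma~\ref{tim}.
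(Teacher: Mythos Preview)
Your proposal is correct, and at its core it coincides with the paper's argument: both directions reduce the nontrivial implication $(2)\Rightarrow(1)$ to the boundedness of $\Pscr\otimes^L k(x)$, obtained from hypothesis~(2) via a coherent sheaf supported on $\overline{\{x\}}$.

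The packaging differs. You invoke the fibrewise criterion for relative perfectness from SGA~6 as a black box and verify its hypothesis; the paper instead cites the pointwise criterion \cite[Proposition~4.3]{Illusie} and then carries out the underlying local argument by hand---replacing $\Pscr_{(x,y)}$ by a bounded-above complex of free $\Ol_{X,x}$-modules, truncating, and using $\Tor_1^{\Ol_{X,x}}(\Kscr_{(x,y)},k(x))=0$ to conclude that the truncation kernel is flat. Your route is cleaner and makes the dependence on SGA~6 explicit; the paper's route is more self-contained but tacitly relies on the local criterion for flatness in its relative form (the free $\Ol_{X,x}$-resolution need not have finite-rank terms, so one is really using that $\Kscr_{(x,y)}$ is finitely generated over $\Ol_{X\times_S Y,(x,y)}$). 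Your alternative via Beilinson's resolution is genuinely different and would also work, though it is more elaborate than either argument above.
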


\begin{proof}
Assume that for all $\Fscr\in D^b_{\coh}(X)$ we have $\Pscr\otimes \pi_X^*\Fscr\in D^b_{\coh}(X\times_S Y)$. Then first of all $\Pscr = \Pscr\otimes \pi_X^*\Ol_X \in D^b_{\coh}(X\times_S Y)$.

By \cite[Proposition 4.3]{Illusie}, it is enough to show that for every point $ (x,y)\in X\times_S Y$ there exist numbers $a,b$ such that
$\Pscr\in D_{\coh}^{[a,b]}(X\times_S Y)$ in a neighborhood of $(x,y)$ and
$$\text{tor amp}_{\Ol_{X,x}}(\Pscr_{(x,y)})\subset [a,b],$$
where $x=\pi_X((x,y))\in X$.

Since $\Pscr_{(x,y)}\otimes_{\Ol_{X,x}}M_x$ is bounded for all $M_x$ (because we know that we land in $D^b_{\coh}(X\times_S Y)$), there exists some $N$ such that 
$$\Tor^{\Ol_{X,x}}_i(\Pscr_{(x,y)}, k(x))=0$$
for all $i\geq N$, where $k(x)$ is the residue field at $x$.

Now replace $\Pscr_{(x,y)}$ by a bounded above complex of free $\Ol_{X,x}$-modules
$$\Qscr_{(x,y)} := \ldots \to \Qscr_{(x,y)}^n \to \Qscr_{(x,y)}^{n+1} \to \ldots \to \Qscr_{(x,y)}^b \to 0.$$
The complex is exact in low enough degree since $\Pscr \in D^b_{\coh}(X\times_S Y)$. Let $a < -N$ be low enough so that $\Qscr_{(x,y)}$ is exact in degree $\leq a+1$, and truncate the complex by taking $\Kscr_{(x,y)}=\ker (\Qscr_{(x,y)}^{a+1}\to\Qscr_{(x,y)}^{a+2})$ so that we have a complex 
$$ 0\to \Kscr_{(x,y)} \to \Qscr_{(x,y)}^{a+1} \to \Qscr_{(x,y)}^{a+2} \to \ldots \to \Qscr_{(x,y)}^b \to 0$$
which is also quasi-isomorphic to $\Pscr_{(x,y)}$.

Now consider the distinguished triangle $\Kscr_{(x,y)}[-a-1] \to \Qscr^{\geq a+1} \to \Pscr$. By the long exact sequence on Tor we have 
$$\Tor^{\Ol_{X,x}}_1(\Kscr_{(x,y)},k(x)) = \Tor^{\Ol_{X,x}}_{-a}(\Kscr_{(x,y)}[-a-1],k(x))= \Tor^{\Ol_{X,x}}_{-a}(\Qscr_{(x,y)}^{\geq a+1},k(x))=0$$
since $\Qscr^{\geq a+1}$ is zero in degree $a$.

Hence we obtain that $\Kscr_{(x,y)}$ is also locally free as an $\Ol_{X,x}$-module, hence $\Pscr_{(x,y)}$ is quasi-isomorphic to a bounded complex of locally free $\Ol_{X,x}$-modules and $\text{tor amp}_{\Ol_{X,x}}(\Pscr_{(x,y)})\subset [a,b].$

%
%Then this implies by \cite[Proposition 4.4.11]{Weibel} that the projective dimension is bounded by $N-1$: $\mathrm{pd }_{\Ol_{X,x}} \Pscr_{(x,y)}\leq N-1$. But then by \cite{Gruson} we obtain that $\mathrm{pd }_{\Ol_{X,x}} \Pscr_{(x,y)}\leq \dim\Ol_{X,x}$. Hence $H^j(\Pscr_{(x,y)}\stackrel{L}{\otimes}_{\Ol_{X,x}} M_x)=0$ for $j\notin [\dim\Ol_{X,x},0]$.

The other implication is clear.
\end{proof}

\begin{remark}
The notion in Lemma \ref{finite homological dimension}(2) corresponds to the notion of ``finite homological dimension'' in \cite{spaniards1, spaniards2}. The notion of perfection with respect to a morphism might seem stronger at first sight, but as we just saw, the two notions are actually equivalent.
\end{remark}

\begin{corollary}\label{X-perfect}
Let $X$ be projective over $S$. Let $\Pscr \in D^b_{\Qch}(X\times_S Y)$. The functor $\Phi^{X\to Y}_{\Pscr}$ maps $D^b_{\coh}(X)$ to $D^b_{\coh}(Y)$ if and only if $\Pscr$ is in $D^b_{\coh}(X\times_S Y)$ and is $\pi_X$-perfect.
\end{corollary}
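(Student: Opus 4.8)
The plan is to reduce Corollary \ref{X-perfect} to Lemma \ref{finite homological dimension} by translating the condition ``$\Phi^{X\to Y}_{\Pscr}$ maps $D^b_{\coh}(X)$ to $D^b_{\coh}(Y)$'' into the condition that $\Pscr\otimes\pi_X^*\Fscr\in D^b_{\coh}(X\times_S Y)$ for all $\Fscr\in D^b_{\coh}(X)$, using that $X$ is projective over $S$ to descend boundedness along $\pi_Y$. Recall that $\Phi^{X\to Y}_{\Pscr}(\Fscr)=\pi_{Y*}(\pi_X^*\Fscr\otimes\Pscr)$, so the issue is precisely comparing $D^b_{\coh}(Y)$-membership of $\pi_{Y*}$ of a complex with $D^b_{\coh}(X\times_S Y)$-membership of that complex.

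First I would handle the ``if'' direction, which is essentially standard: if $\Pscr$ is $\pi_X$-perfect and in $D^b_{\coh}(X\times_S Y)$, then for $\Fscr\in D^b_{\coh}(X)$ the complex $\pi_X^*\Fscr$ has bounded coherent cohomology locally (one can reduce to $\Fscr$ perfect on affines, or simply use that $\pi_X$ is essentially of finite type over a Noetherian base so $\pi_X^*$ preserves $D^b_{\coh}$ up to a local shift controlled by the Tor-dimension given by $\pi_X$-perfection of $\Pscr$); tensoring with the $\pi_X$-perfect $\Pscr$ keeps us in $D^b_{\coh}(X\times_S Y)$ by Lemma \ref{finite homological dimension}, and then $\pi_{Y*}$ of a bounded-coherent complex under the projective (hence proper) morphism $\pi_Y$ lands in $D^b_{\coh}(Y)$ by Grothendieck's coherence/finiteness theorem. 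Note $\pi_Y$ is projective here because $X\to S$ is projective and projectivity is stable under base change.

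For the ``only if'' direction, suppose $\Phi^{X\to Y}_{\Pscr}$ maps $D^b_{\coh}(X)$ into $D^b_{\coh}(Y)$. I want to show $\Pscr\otimes\pi_X^*\Fscr\in D^b_{\coh}(X\times_S Y)$ for every $\Fscr\in D^b_{\coh}(X)$ and then invoke Lemma \ref{finite homological dimension}. Taking $\Fscr=\Ol_X$ already gives $\pi_{Y*}\Pscr\in D^b_{\coh}(Y)$. The key tool is Lemma \ref{beilinson}, applied to the projective morphism $\pi_Y:X\times_S Y\to Y$ with a relatively very ample line bundle $\Ol(1)$ on $X\times_S Y$ (pulled back suitably from a relatively very ample bundle for $X\to S$): part (1) says $\Escr\in D^b_{\coh}(X\times_S Y)$ iff $\pi_{Y*}(\Escr(n))\in D^b_{\coh}(Y)$ for all $n$. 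So it suffices to show $\pi_{Y*}((\Pscr\otimes\pi_X^*\Fscr)(n))\in D^b_{\coh}(Y)$ for all $n$ and all $\Fscr\in D^b_{\coh}(X)$. Since $\Ol(1)$ is relatively ample for $X\to S$, one can arrange that $\Ol(n)|_{X\times_S Y}\cong \pi_X^*\Ol_X(n)$ (or, if the ample bundle involves both factors, absorb a twist by a line bundle on $Y$, which does not affect membership in $D^b_{\coh}(Y)$); thus $(\Pscr\otimes\pi_X^*\Fscr)(n)\cong\Pscr\otimes\pi_X^*(\Fscr(n))$ up to a harmless twist on $Y$, and $\Fscr(n)\in D^b_{\coh}(X)$, so $\pi_{Y*}$ of this is $\Phi^{X\to Y}_{\Pscr}(\Fscr(n))$ up to twisting by a line bundle on $Y$, which lies in $D^b_{\coh}(Y)$ by hypothesis.

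The main obstacle I anticipate is the bookkeeping around the relatively very ample bundle on $X\times_S Y$: a relatively very ample bundle for $\pi_Y$ need not be literally $\pi_X^*\Ol_X(1)$, so one must either choose the embedding of $X\to S$ carefully so that its pullback works, or argue that twisting the ``$n$'' by an auxiliary line bundle on $Y$ is invisible both to the hypothesis on $\Phi^{X\to Y}_{\Pscr}$ (since a line bundle on $Y$ tensored into a $D^b_{\coh}(Y)$ complex stays in $D^b_{\coh}(Y)$) and to the conclusion via Lemma \ref{beilinson}. Once that identification is in place, the corollary follows immediately by combining the two directions with Lemma \ref{finite homological dimension}. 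Everything else — stability of projectivity under base change, coherence of proper pushforward, the behavior of $\pi_X^*$ on $D^b_{\coh}$ — is routine.
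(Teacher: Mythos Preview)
Your proposal is correct and follows essentially the same route as the paper: use Lemma~\ref{beilinson}(1) for the projective morphism $\pi_Y$ to upgrade the hypothesis on $\Phi^{X\to Y}_{\Pscr}$ to the statement that $\pi_X^*(-)\otimes\Pscr$ lands in $D^b_{\coh}(X\times_S Y)$, then invoke Lemma~\ref{finite homological dimension}. Your bookkeeping worry is unnecessary: since $\Ol_X(1)$ is relatively very ample for $X\to S$, its base change $\pi_X^*\Ol_X(1)$ is relatively very ample for $\pi_Y$, so the twist is literally $(\Pscr\otimes\pi_X^*\Fscr)(n)=\Pscr\otimes\pi_X^*(\Fscr(n))$ with no $Y$-twist required.
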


\begin{proof}
Assume that $\Phi^{X\to Y}_{\Pscr}$ maps $D^b_{\coh}(X)$ to $D^b_{\coh}(Y)$. By Lemma \ref{beilinson} it follows that the functor $\pi_X^*(-) \otimes \Pscr$ sends $D^b_{\coh}(X)$ to $D^b_{\coh}(X\times_S Y)$. We conclude by Lemma \ref{finite homological dimension}. The other implication is clear.
\end{proof}

\begin{corollary}\label{both_perfect}
Assume that $X\to S$ and $Y\to S$ are projective. Let $\Pscr \in D^b_{\Qch}(X\times_S Y)$. If the functor $\Phi^{X\to Y}_{\Pscr}$ gives an equivalence $D^b_{\coh}(X)$ to $D^b_{\coh}(Y)$, then if follows that $\Pscr$ is $\pi_X$- and $\pi_Y$-perfect. Hence in particular it has left and right adjoints given by the formulas of Theorems \ref{left} and \ref{right}.
\end{corollary}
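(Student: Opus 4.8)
The plan is to deduce $\pi_X$-perfectness and $\pi_Y$-perfectness of $\Pscr$ from the equivalence, and then quote Theorems \ref{left} and \ref{right} directly. The key observation is that if $\Phi := \Phi^{X\to Y}_{\Pscr}$ is an equivalence of $D^b_{\coh}(X)$ with $D^b_{\coh}(Y)$, then in particular it \emph{maps} $D^b_{\coh}(X)$ into $D^b_{\coh}(Y)$, so Corollary \ref{X-perfect} (applied with $X$ projective over $S$) immediately gives that $\Pscr \in D^b_{\coh}(X\times_S Y)$ and is $\pi_X$-perfect. The entire content is therefore to get $\pi_Y$-perfectness as well, and the natural route is symmetry: I would argue that the quasi-inverse of $\Phi$ is itself a Fourier--Mukai transform $\Phi^{Y\to X}_{\Qscr}$ for a suitable kernel $\Qscr \in D^b_{\coh}(X\times_S Y)$, so that $\Phi^{Y\to X}_{\Qscr}$ maps $D^b_{\coh}(Y)$ to $D^b_{\coh}(X)$; then Corollary \ref{X-perfect} with the roles of $X$ and $Y$ reversed (using $Y$ projective over $S$) would give that $\Qscr$ is $\pi_Y$-perfect.

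To close the argument I would then need to relate the perfectness of $\Qscr$ over $\pi_Y$ to that of $\Pscr$ over $\pi_Y$. For this I expect to use that the quasi-inverse of an equivalence is simultaneously its left and its right adjoint, together with the adjoint formulas already at our disposal: since $\Pscr$ is $\pi_X$-perfect, Theorem \ref{left} gives that the left adjoint of $\Phi$ is $\Phi^{Y\to X}_{\Pscr_L}$ with $\Pscr_L = \RHomscr_{X\times_S Y}(\Pscr, \pi_X^! \Ol_X)$. Because $\Phi$ is an equivalence, this left adjoint is the quasi-inverse, hence an equivalence, hence maps $D^b_{\coh}(Y)$ to $D^b_{\coh}(X)$; applying Corollary \ref{X-perfect} in the $Y \to X$ direction shows $\Pscr_L$ is $\pi_Y$-perfect. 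Now $\Pscr$ is derived $\pi_X^!\Ol_X$-reflexive by Lemma \ref{reflexive} (the structure morphisms in question are separated of finite type, hence compactifiable), so $\Pscr \cong \RHomscr_{X\times_S Y}(\Pscr_L, \pi_X^!\Ol_X)$, and Proposition \ref{perfect is mantained} — with the perfect complex $\Ol_X$ on $X$ and the morphism $\pi_X$ — turns the $\pi_Y$-perfect complex... wait: Proposition \ref{perfect is mantained} preserves $\pi_X$-perfectness, not $\pi_Y$. So instead I would observe directly that $\pi_Y$-perfectness is a local condition on $X\times_S Y$ that only depends on $\Pscr$ in the derived category of $\pi_Y^{-1}\Ol_Y$-modules, and that $\RHomscr$ against the relatively dualizing complex $\pi_X^!\Ol_X$ is, locally, an exact operation that preserves boundedness of Tor-amplitude over $Y$; more cleanly, I would invoke the two-out-of-three property of $f$-perfectness (the Remark after the definition of relative perfect complexes) applied to a local free resolution to transfer $\pi_Y$-perfectness from $\Pscr_L$ back to $\Pscr$.

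The main obstacle I anticipate is precisely this last transfer step: establishing that $\RHomscr_{X\times_S Y}(-, \pi_X^!\Ol_X)$ sends $\pi_Y$-perfect complexes to $\pi_Y$-perfect complexes. The cleanest fix, and the one I would pursue first, is to avoid $\Pscr_L$ altogether and use the quasi-inverse being a two-sided adjoint together with the $\pi_Y$-perfect kernel formula: if we already knew \emph{some} Fourier--Mukai presentation $\Phi^{Y\to X}_{\Qscr}$ of the quasi-inverse with $\Qscr$ a kernel that is $\pi_Y$-perfect on $X\times_S Y$, then running Theorem \ref{right} on $\Phi^{Y\to X}_{\Qscr}$ recovers $\Phi^{X\to Y}_{\Qscr_R}$ as its right adjoint, which is $\Phi$ again; since Fourier--Mukai kernels over projective $X, Y$ are essentially unique (two kernels inducing isomorphic functors are isomorphic, by a standard argument using skyscrapers — or by Lemma \ref{beilinson} applied on $X\times_S Y$), one gets $\Pscr \cong \Qscr_R = \RHomscr(\Qscr, \pi_Y^!\Ol_Y)$, and $\pi_Y$-perfectness of $\Pscr$ then follows from $\pi_Y$-perfectness of $\Qscr$ by the \emph{genuine} content of Proposition \ref{perfect is mantained} (with the morphism $\pi_Y$ and the perfect complex $\Ol_Y$ on $Y$). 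Thus the real work is to produce the Fourier--Mukai presentation of the quasi-inverse with a coherent kernel in the first place; here I would appeal to the projectivity hypotheses and the fact that the quasi-inverse, being an exact functor between bounded coherent derived categories of projective $S$-schemes, is representable by a kernel — and that kernel, making $\Phi^{Y\to X}_{\Qscr}$ land in $D^b_{\coh}$, is $\pi_Y$-perfect by Corollary \ref{X-perfect}. Once both perfectness statements are in hand, the existence of the adjoints with the stated explicit formulas is immediate from Theorems \ref{left} and \ref{right}.
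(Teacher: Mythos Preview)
Your first step --- getting $\Pscr\in D^b_{\coh}(X\times_S Y)$ and $\pi_X$-perfectness from Corollary~\ref{X-perfect} --- matches the paper exactly. The divergence is in how you obtain $\pi_Y$-perfectness. The paper does not attempt any bootstrap through the adjoint kernels $\Pscr_L$ or through a Fourier--Mukai presentation of the quasi-inverse; it simply cites \cite[Proposition~2.10]{spaniards2}, which (in the projective setting) shows directly that if $\Phi^{X\to Y}_{\Pscr}$ is an equivalence on $D^b_{\coh}$ then $\Pscr$ has finite homological dimension over both factors, and then invokes Lemma~\ref{finite homological dimension} to translate this into $\pi_X$- and $\pi_Y$-perfectness. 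That is the entire argument.

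Your route, by contrast, rests on two assertions that are \emph{not} available in the singular projective setting of this paper and that you do not prove. First, ``the quasi-inverse \ldots\ is representable by a kernel'': Orlov-type representability theorems require smoothness, and nothing in this paper supplies a kernel for an arbitrary exact equivalence between $D^b_{\coh}$ of singular projective $S$-schemes. You do have the candidate $\Pscr_L$, but to use it you must show that the $D_{\Qch}$-level left adjoint $\Phi^{Y\to X}_{\Pscr_L}$ actually lands in $D^b_{\coh}(X)$ when restricted to $D^b_{\coh}(Y)$ --- which is precisely what you are trying to establish (via Corollary~\ref{X-perfect} it is equivalent to $\pi_Y$-perfectness of $\Pscr_L$), so the argument is circular as written. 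Second, ``Fourier--Mukai kernels over projective $X,Y$ are essentially unique'': the skyscraper argument you allude to again uses smoothness, and Lemma~\ref{beilinson} does not give uniqueness of kernels. Without these two ingredients your transfer from $\Qscr$ back to $\Pscr$ does not go through. The cited result \cite[Proposition~2.10]{spaniards2} is doing real work here that your outline does not replace.
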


\begin{proof}
$\Pscr$ is in $D^b_{coh}(X\times_S Y)$ by Lemma \ref{X-perfect}. By \cite[Proposition 2.10]{spaniards2} and Lemma \ref{finite homological dimension}, it follows that $\Pscr$ is $\pi_X$- and $\pi_Y$-perfect. We conclude by Theorems \ref{left} and \ref{right}.
\end{proof}


\begin{thebibliography}{99}
\bibitem{tim} R.~Anno, T.~Logvinenko, \emph{On adjunctions for Fourier-Mukai transforms}, Advances in Mathematics \textbf{231} (3-4), pp. 2069-2115.

\bibitem{reflexivity} L.~Avramov, S.~Iyengar, J.~Lipman, \emph{Reflexivity and rigidity for complexes, II: Schemes}, arXiv:1001.3450

\bibitem{reduction} L.~ Avramov, , S.~Iyengar, J.~Lipman, , S.~Nayak, \emph{Reduction of derived Hochschild functors over commutative algebras and schemes}, Advances in Mathematics \textbf{223}, (2) (2010), pp. 735-772

\bibitem{Bayer-Bridgeland} A.~Bayer, T.~Bridgeland, \emph{Derived automorphism groups of K3 surfaces of Picard rank 1}, arXiv:1310.8266

\bibitem{Bridgeland} T.~Bridgeland, \emph{Stability conditions on triangulated categories}, Ann. of Math. (2) \textbf{166} (2007), no. 2, 317-345

\bibitem{BondalVdB} A.~Bondal, M. Van den Bergh, \emph{Generators and representability of functors in commutative and noncommutative geometry}, Mosc. Math. J. \textbf{3} (2003), no. 1, 1-36, 258

\bibitem{Nagata} B.~Conrad, \emph{Deligne's notes on Nagata compactifications}, J. Ramanujan Math. Soc. \textbf{22} (2007), no. 3, 205-257. 

\bibitem{DW} W.~Donovan, M. Wemyss, \emph{Noncommutative deformations and flops}, arXiv:1309.0698

\bibitem{EGA1} A.~Grothendieck, J.~Dieudonn\'e, \emph{El\'ements de g\'eom\'etrie alg\'ebrique I: Le langage des sch\'emas}, Publications math\`ematiques de l'I.H.E.S. \textbf{4} (1960), pp. 5-228.

\bibitem{Gruson} L.~Gruson and M.~Raynaud, \emph{Crit\`eres de platitude et de projectivit\'e. Techniques de ``platification'' d'un module}, Invent. Math. \textbf{13}, (1971) 1-89

\bibitem{autoequivalences} D.~Halpern-Leistner and I.~Shipman, \emph{Autoequivalences of derived categories via geometric invariant
theory}, arXiv:1303.5531.

\bibitem{spaniards1} D.~Hern\'andez Ruip\'erez, A. C. L\'opez Mart\'in, F. Sancho de Salas, \emph{Fourier-Mukai transforms for Gorenstein schemes}, Adv. Math. \textbf{211} (2007), no. 2, 594-620. 

\bibitem{spaniards2} D.~Hern\'andez Ruip\'erez, A. C. L\'opez Mart\'in, F. Sancho de Salas, \emph{Relative Integral Functors for Singular Fibrations and Singular Partners}, J. Eur. Math. Soc. (JEMS) \textbf{11} (2009), no. 3, p. 597-625. 

\bibitem{Illusie gen} L.~Illusie, \emph{G\'eneralit\'es sur les Conditions de Finitude dans les Cat\'egories Deriv\'ees}, Th\'eorie des Intersections et Th\'eor\`eme de Riemann-Roch, Lecture Notes in Mathematics \textbf{225} (1971), p. 78-159

\bibitem{Illusie} L.~Illusie, \emph{Conditions de Finitude Relatives}, Th\'eorie des Intersections et Th\'eor\`eme de Riemann-Roch,
Lecture Notes in Mathematics \textbf{225} (1971), p. 222-273

\bibitem{relation} S.~Iyengar, J.~Lipman, A.~Neeman, \emph{Relation between two twisted image pseudofunctors in duality theory}, Compos. Math. \textbf{151} (2015), no. 4, 735-764

\bibitem{Lipman} J.~Lipman, \emph{Foundations of Grothendieck Duality for Diagrams of Schemes}, Lecture Notes in Math. \textbf{1960}, Springer-Verlag, New York, 2009, 1-259

\bibitem{Neeman} A.~Neeman, \emph{An improvement on the base-change theorem and the functor $f^!$}, arXiv:1406.7599

\bibitem{Neeman2} A.~Neeman, \emph{The Grothendieck duality theorem via Bousfield's techniques and Brown representability}, J. Amer. Math. Soc. \textbf{9} (1996), 205-236

\bibitem{Neeman3} A.~Neeman, \emph{The connection between the $K$-theory localization theorem of Thomason, Trobaugh and Yao and the smashing subcategories of Bousfield and Ravenel}, Ann. Sci. \'Ecole Norm. Sup. (4) \textbf{25} (1992), no. 5, 547-566

\bibitem{Orlov}
D.~O. Orlov, \emph{Equivalences of derived categories and {$K3$} surfaces}, J. Math. Sci. (New York) \textbf{84} (1997), no.~5, 1361-1381, Algebraic geometry, 7.

\bibitem{Spaltenstein} N.~Spaltenstein, \emph{Resolutions of unbounded complexes}, Compositio Math. \textbf{65} (2) (1988), pp. 121-154

\bibitem{stacks-project} The {Stacks Project Authors}, \emph{The Stacks Project}, \url{http://stacks.math.columbia.edu}, 2015

\bibitem{Toda} Y.~Toda, \emph{On a certain generalization of spherical twists}, Bulletin de la Soci\'et\'e Math\'ematique
de France \textbf{135}, Fascicule 1 (2007), 119-134.

\bibitem{Weibel} C.~Weibel, \emph{An Introduction to Homological Algebra}, Cambridge Studies in Advanced Mathematics \textbf{38}, Cambridge University Press, Cambridge, 1994


\end{thebibliography}
\end{document}